\documentclass[12pt]{article}
\usepackage{amsmath,amsxtra,amssymb,latexsym, amscd,amsthm,pb-diagram,fancyhdr,euscript}
\usepackage{amsthm}
\usepackage{latexsym}
\usepackage{amssymb}
\usepackage{amsmath}
\usepackage{indentfirst}
\usepackage{hyperref}
\usepackage{mathrsfs}
\usepackage{array}
\usepackage{euscript}
\usepackage{slashed}
\usepackage[T1]{fontenc}
\usepackage{graphicx}
\usepackage[utf8]{inputenc}
\usepackage[french,english]{babel}
\usepackage{lipsum} 
\usepackage{multicol}
\hypersetup{
   colorlinks,
    citecolor=blue,
    filecolor=black,
    linkcolor=blue,
    urlcolor=magenta
}
\hypersetup{linktocpage}

\setlength{\textwidth}{16.5cm}\textheight=21.5cm\oddsidemargin=0cm\evensidemargin=0cm

\newtheorem{theorem}{Theorem}[section]
\newtheorem{lemma}[theorem]{Lemma}
\newtheorem{proposition}[theorem]{Proposition}

\newtheorem{definition}[theorem]{Definition}

\newtheorem{remark}[theorem]{Remark}

\newcommand{\norm}[1]{\left\Vert#1\right\Vert}

\numberwithin{equation}{section}
\usepackage{amsfonts}

\usepackage{amsmath}
\usepackage{amssymb}
\usepackage{cases}


\def\dive{\mathrm{div}}

\def\curl{\mathrm{Curl}}

\def\geq{\geqslant} 
\def\leq{\leqslant}

\def\C+{C_+([t_0,\infty))}

\topmargin0cm
\headheight-1cm
\headsep1cm
\oddsidemargin0pt
\evensidemargin0pt
\textheight21.5cm
\textwidth16.5cm

\begin{document}\mbox{}
\vspace{0.25in}

\begin{center}

{\huge{\bf On attractor's dimensions of the modified Leray-alpha equation}}

\vspace{0.25in}

\large{{\bf PHAM Truong Xuan}\footnote{Faculty of Information Technology, Department of Mathematics, Thuyloi university, Khoa Cong nghe Thong tin, Bo mon Toan, Dai hoc Thuy loi, 175 Tay Son, Dong Da, Ha Noi, Viet Nam. 
\noindent
Email: xuanpt@tlu.edu.vn or phamtruongxuan.k5@gmail.com}}
\&\large{{\bf NGUYEN Thi Van Anh}\footnote{Faculty of Mathematics, Hanoi National University of Education, 136 Xuan Thuy, Cau Giay, Ha Noi, Viet Nam.}}

\end{center}

\begin{abstract} 
The primary objective of this paper is to investigate the modified Leray-alpha equation on the two-dimensional sphere $\mathbb{S}^2$, the square torus $\mathbb{T}^2$ and the three-torus $\mathbb{T}^3$. In the strategy, we prove the existence and the uniqueness of the weak solutions and also the existence of the global attractor for the equation. Then we establish the upper and lower bounds of the Hausdorff and fractal dimensions of the global attractor on both $\mathbb{S}^2$ and $\mathbb{T}^2$. Our method is based on the estimates for the vorticity scalar equations and the stationary solutions around the invariant manifold that are constructed by using the Kolmogorov flows. Finally, we will use the results on $\mathbb{T}^2$ to study the lower bound for attractor's dimensions on the case of $\mathbb{T}^3$.
\end{abstract}

{\bf Keywords.} Modified Leray-alpha equation, $2$-dimensional sphere, square torus, three-torus, global attractor, Hausdorff (fractal) dimension, Kolmogorov flows.

{\bf 2010 Mathematics subject classification.} Primary 35Q30, 76D03, 76F20; Secondary 58A14, 58D17, 58D25, 58D30.

\tableofcontents

\section{Introduction}
The study about the solutions and their asymptotic behaviours of the models of turbulence theory plays an important role to analyse the dynamics of the homogeneous imcompressible fluid flows and many pratical applications. In particular, there is a lot of interest on the three averaged turbulence equations: the Navier-Stokes-alpha, the modified Leray-alpha and the Bardina equations which convergence to the Navier-Stokes equation when the parameter $\alpha$ tends to zero. The existence and uniqueness of weak solutions were established in \cite{CaLuTi,Fo,LaLe,IlLuTi,Mars2001}. The existence of the global attractor and the upper and lower of attractor's Hausdorff and fractal dimensions were studied in \cite{CheHoOlTi,Co,Ily2004,Il2004',IlLuTi}. The existence of the inertial manifold for these equations were obtained recently in\cite{Ti2014,Ko2019,Li}. The algebraic decays in time were given for the Navier-Stokes-alpha equation in \cite{Scho}.

Beside, there are some other works for the equations with damp coeficients such as $2$-D damped-driven Navier-Stokes equations and damped $2$-D and $3$-D Euler-Bardina equations \cite{Ily2004,Il2021,Il21}. In these works, the authors established the well-posedness of the weak solutions and derived the upper and lower bounds of the global attractor's dimensions.

The premilinary method used to study the upper bound of the attractor's dimension is to combine the fundamental theorem about the relation between the Lyapunov exponents and the Hausdorff (fractal) dimension of attractor (see \cite{Il2001,Il2004,Te1988}) and the Leib-Sobolev-Thirring inequality. The lower bound of the dimensions of the global attractor has been studied by using the Kolomogorov flow to construct the family of stationary solution that was given initially for the Navier-Stokes equation in \cite{Liu}. Then this method is developed for the other turbulence equation in \cite{Ily2004,Xuan2021} and the equations with damp coefficients in \cite{Il2021,Il21}.

Concerning the study of Navier-Stokes and averaged turbulence equations on the compact manifolds, there are some works on the attractor's dimensions of the Navier-Stokes and the turbulence equations on the $2$-D closed manifolds such as the sphere $\mathbb{S}^2$ and the square torus $\mathbb{T}^2$. The authors have treated the Navier-Stokes equation in \cite{Il1994,Il1999}, the Navier-Stokes-alpha equation in \cite{Ily2004} and the simplified Bardina equation in \cite{Xuan2021}. 

In the present paper we study the modified Leray-alpha equation on the $2$-D closed manifold ${\bf M}$:
\begin{align}\label{VCM}
\begin{cases}
v_t - \nu \Delta v + v \cdot \nabla u = - \nabla p + f, \cr 
\nabla \cdot v =  0, \cr
v = u - \alpha^2\Delta u,
\end{cases}
\end{align}
where $\nu$ is viscous constant, the velocity $v$ and the filtered function $u$ are unknown which are belong to ${\bf TM}$, $p$ is the pressure and $f$ is external force.

We recall that the $2$-D modified Leray-alpha equation in $\mathbb{R}^2$ with periodic boundary condition was studied in \cite{Ti2014} for the well-posedness and the existence of an inertial manifold. The $3$-D modified Leray-alpha equation in $\mathbb{R}^3$ with periodic boundary condition was studied in \cite{IlLuTi}. The authors established the well-posedness of the weak solution and derivered a upper bound of the dimensions of the global attractor by using the Leib-Sobolev-Thirring in $\mathbb{R}^3$. Recently, the existence of the inertial manifold for the $3$-D equation has established fully in \cite{Ko2019,Li}. 

We will extend and apply the recent work for the simplified Bardina equation of one of the authors \cite{Xuan2021} to consider the modified Leray-alpha equations on $2$-D closed manifold ${\bf M}$ detailized by the sphere $\mathbb{S}^2$ and the square torus $\mathbb{T}^2$. We will establish the well-posedness of the weak solution by the Galerkin approximation method (see Section \ref{S2}). Then, we derive a upper bound of the Hausdorff and fractal dimensions of the global attractor in both the sphere $\mathbb{S}^2$ and tourus $\mathbb{T}^2$ by using the vorticity scalar form of the equation \eqref{VCM} and the generalized theorem of the dimension of attractor on the uniform Lyapunov exponents (see Section \ref{S3}). The lower bound of the dimension in the case of the torus $\mathbb{T}^2$ is obtained by using the Kolomogorov flows to construct the stationary solutions around the invariant manifold (see Section \ref{S3}). In presicely, we will prove in this paper that the upper and lower bounds of the attractor's dimensions are coincided to the ones of the $2$-D simplified Bardina equation and they are improved in comparing with the case of $2$-D Navier-Stokes equation. Finally, we will extend and apply the recent work of Ilyin, Zelik and Kostiano \cite{Il21} to establish the upper bound of the attractor's dimensions in the $3$-D torus (see Section \ref{S5}). The method uses the Squire's transformation to transform the $3$-D equation to the $2$-D case, then apply the results of the lower bound obtained in $2$-D case. Our results with the one obtained in \cite{IlLuTi} complete the two-side estimates of the global attractor's dimensions for the modified Leray-alpha equation in $3$-D case.

This paper is organized as follows: Section \ref{S1} gives some basic formulas and the setting of the modified Leray-alpha equation, Section \ref{S2} discuss the well-posedness of the weak solutions of the equation on the sphere and torus, Section \ref{S} gives the upper and lower bounds of the attractor's dimensions on $\mathbb{T}^2$ and Section \ref{S5} relies on the lower bound on $\mathbb{T}^3$.


\section{Geometrical and analytical setting}\label{S1}
\subsection{Geometric formula and functional spaces}
We recall some geometric formulas on the $2$-dimensional closed manifold $({\bf M},g)$ embedded in $\mathbb{R}^3$ with trivial harnomic forms detailized by the two sphere $\mathbb{S}^2$ and the square torus $\mathbb{T}^2$ (see for details \cite{Il1990,Il1994}). We denote by ${\bf TM}$ the set of tangent vector fields on ${\bf M}$ and by $({\bf TM})^{\bot}$ the set of normal vector fields. We have the definitions of the following operators
$$\curl_n: {\bf TM} \rightarrow ({\bf TM})^{\bot} \, \mbox{and} \, \curl:({\bf TM})^{\bot}\rightarrow {\bf TM}$$
in a neighbourhood of ${\bf M}$ in $\mathbb{R}^3$ as follows:
\begin{definition}
Let $u$ be a smooth vector field on ${\bf M}$ with values in ${\bf TM}$, and let $\vec{\psi}$ be a smooth vector field on ${\bf M}$ with values in $({\bf TM})^\bot$, i.e. $\vec{\psi} = \psi\vec{n}$, where $\vec{n}$ is the outward unit normal vector to ${\bf M}$ and $\psi$ is a smooth scalar function. We then identify the vector field $\vec{\psi}$ with the scalar function $\psi$. Let $\hat u$ and $\hat\psi$ be smooth extensions of $u$ and $\psi$ into a neighbourhood of ${\bf M}$ in $\mathbb{R}^3$ such that $\hat{u}|_{\bf M}=u$ and $\hat{\psi}|_{\bf M}=\psi$. For $x \in {\bf M}$ and $y\in \mathbb{R}^3$, we define
$$\mathrm{Curl}_nu(x) = (\curl\hat{u}(y)\cdot \vec{n}(y))\vec{n}(y)|_{y=x},$$
$$\mathrm{Curl}\vec{\psi}(x) = \curl\psi(x)= \curl\hat{\psi}(y)|_{y=x},$$
where the operator $\curl$ that appears on the right hand sides is the classical $\curl$ operator in $\mathbb{R}^3$.
\end{definition} 
The above definitions of $\curl_nu$ and $\curl\psi$ are independent of the choice of the neighbourhood of ${\bf M}$ in $\mathbb{R}^3$. Moreover, the following formulas hold
\begin{equation}\label{equal1}
\curl_nu = -\vec{n}\dive(\vec{n}\times u), \, \curl\psi= -\vec{n}\times \nabla \psi,
\end{equation}
\begin{equation}\label{equal2.1}
	v \cdot \nabla u + u \cdot \nabla v^T = \nabla (v\cdot u) - v \times \curl_nu,
\end{equation}
\begin{equation}\label{equal3}
\Delta u = \nabla \dive u - \curl\curl_n u,
\end{equation}
where $\times$ is the outer vector product in $\mathbb{R}^3$, $\nabla\psi$ is gradient of the scalar function, $\nabla_vu$ is covariant derivative along the vector field, $\Delta$ is Laplace-de Rham operator defined on the vector fields (see the definition and formula of $\Delta$ in \cite{Il1990}) and $(v\cdot u^T)_i:= \sum_j v_j \partial_i u_j$ in a local basic frame $(x^1,\, x^2)$ of $({\bf M},g)$. 

Let $L^p({\bf M})$ and $L^p({\bf TM})$ be the $L^p$-spaces of the scalar functions and the tangent vector fields on ${\bf M}$ respectively. Let $H^p({\bf M})$ and $H^p({\bf TM})$ be the corresponding Sobolev spaces of scalar functions and vector fields. The inner product on $L^2({\bf M})$ and $L^2({\bf TM})$ are given by
$$\left<u,v\right>_{L^2({\bf M})} = \int_{\bf M}u\bar{v}\mathrm{dVol}_{\bf M}, \, \mbox{for} \, u,v \in L^2({\bf M}),$$
$$\left<u,v\right>_{L^2({\bf TM})} = \int_{\bf M} u \cdot \bar{v} \mathrm{dVol}_{\bf M}, \, \mbox{for} \, u,v \in L^2({\bf TM}).$$
The following integration by parts formulas will be used frequently
$$\left<\nabla h,v\right>_{L^2({\bf TM})}=-\left<h,\dive v\right>_{L^2({\bf M})},$$
$$\left<\curl\vec{\psi},v\right>_{L^2({\bf TM})} = \left<\vec{\psi},\curl_nv\right>_{L^2({\bf M})}.$$

By using Hodge decomposition we have
$$C^\infty({\bf TM}) = \left\{\nabla \psi \, : \, \psi \in C^\infty({\bf M}) \right\} \oplus \left\{ \mathrm{Curl}\psi \, : \, \psi \in C^\infty({\bf M}) \right\}$$
Putting 
$$\mathcal{V} = \left\{ \curl\psi \, : \, \psi \in C^\infty({\bf M}) \right\} \, , \, H = \overline{\mathcal{V}}^{L^2({\bf TM})} \, , \, V = \overline{\mathcal{V}}^{H^1({\bf TM})},$$
with the norms on $H$ and $V$ are
$$\norm{u}^2_H = \left<u,u\right>, \, \norm{u}^2_V = \left<Au,u\right> = \left<\curl_nu, \curl_nu \right>.$$
Since $\dive u = 0$, we have the Poincar\'e inequality
\begin{equation}\label{norm1}
\norm{u}_H \leq \lambda_1^{-1/2} \left( \norm{u}_V + \norm{\dive u}_H \right) = \lambda_1^{-1/2}\norm{u}_V\end{equation}
where $\lambda_1$ is the first eigenvalue of the Stokes operator $A=\curl\curl_n$. We know that
\begin{equation}\label{norm2}
\norm{u}_{H^1({\bf TM})} = \norm{u}^2_{L^2({\bf TM})} + \norm{\dive u}^2_{L^2({\bf M})} + \norm{\curl_n u}^2_{L^2({\bf M})}.
\end{equation}
From the inequalities \eqref{norm1}, \eqref{norm2} and since $\dive u=0$ on $V$, the norms on $H^1$ and $V$ are equivalent for all  $u\in V$. In the rest of this paper, we denote $\norm{.}_{L^2}:=|.|$, $\norm{.}_{V}:=\norm{.}$ and $\norm{.}_{H^1}:=\norm{.}_1$.

\subsection{The modified Leray-alpha equation on $2$-D closed manifolds}
The modified Leray-alpha equation on ${\bf M}$ have the following form
\begin{align}\label{ModCH}
\begin{cases}
v_t - \nu\Delta v + v\cdot \nabla u + \nabla p = f ,\cr
\nabla \cdot u = \nabla \cdot v = 0,\cr
 v = (I-\alpha^2\Delta)u,
\end{cases} 
\end{align}
where $\nu$ is the viscous coefficient, $p$ is the pressure, $f$ is the external force and the unknown functions $u,\, v \in {\bf TM}$.

Using \eqref{equal3} we re-write Equation \eqref{ModCH} as
\begin{align}\label{1ModCH}
\begin{cases}
v_t + \nu\curl\curl_n v + v\cdot \nabla u + \nabla p = f ,\cr
\nabla \cdot u = \nabla \cdot v = 0,\cr
 v = (I-\alpha^2\Delta)u,
\end{cases} 
\end{align}
By using Hodge projection $\mathbb{P}$ on the space $H=\overline{\mathcal{V}}^{L^2({\bf TM})}$ the first equation becomes 
\begin{align}\label{HodEq}
\begin{cases}
v_t + \nu Av + B(v,u)  = f, ,\cr
\nabla \cdot u = \nabla \cdot v = 0,\cr
 v = (I+\alpha^2 A)u,
\end{cases} 
\end{align}
where $B(v,u)=\mathbb{P}(v\cdot \nabla u)$.

On the other hand, if we put $u=-\curl \psi$ and take $\curl_n$ the first equation in \eqref{ModCH} then we obtain the following vorticity scalar form
\begin{equation}\label{ModCH1}
(\Delta\psi_t - \alpha^2\Delta^2\psi_t) - \nu\Delta(\Delta\psi - \alpha^2\Delta^2\psi) + J((I-\alpha^2\Delta)\psi, \Delta \psi) = \curl_n f. 
\end{equation}
Putting $\varphi = \Delta \psi$ we get
\begin{equation}\label{ModCH2}
(\varphi_t - \alpha^2\Delta\varphi_t) - \nu\Delta(\varphi - \alpha^2\Delta\varphi) + J((\Delta^{-1}(I-\alpha^2\Delta)\varphi, \varphi) = \curl_n f. 
\end{equation}
Therefore
\begin{equation}\label{ModCH3}
\varphi_t - \nu\Delta\varphi + (I-\alpha^2\Delta)^{-1} J((\Delta^{-1}(I-\alpha^2\Delta)\varphi, \varphi) = (I-\alpha^2\Delta)^{-1}\curl_n f. 
\end{equation}

The properties of Jacobian operator $J(a,b) = n \times \nabla a \cdot \nabla b$ are given in the following proposition:
\begin{proposition}
On the two-dimensional closed manifold ${\bf M}$ we have
$$J(a,b)=-J(b,a),\,\,\, \int_{\bf M}J(a,b)\, \mathrm{dVol}_{\bf M} = \int_{\bf M}J(a,b)b \,\mathrm{dVol}_{\bf M} = 0$$
and
$$\int_{\bf M}J(a,b)c \,\mathrm{dVol}_{\bf M} = \int_{\bf M}J(b,c)a \,\mathrm{dVol}_{\bf M}.$$
\end{proposition}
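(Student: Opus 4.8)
The plan is to prove the three stated identities for the Jacobian $J(a,b)=\vec n\times\nabla a\cdot\nabla b$ by reducing each to an application of the divergence theorem on the closed manifold ${\bf M}$, exploiting that ${\bf M}$ has no boundary and that $\curl\psi=-\vec n\times\nabla\psi$ from \eqref{equal1}. The antisymmetry $J(a,b)=-J(b,a)$ is immediate from the antisymmetry of the scalar triple product: swapping $a$ and $b$ interchanges the two gradient factors multiplying $\vec n$, and since $\vec n\times\nabla a\cdot\nabla b=\det(\vec n,\nabla a,\nabla b)$ is an alternating form, the sign flips. I would state this first as it requires no integration.

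For the integral identity $\int_{\bf M}J(a,b)\,\mathrm{dVol}_{\bf M}=0$, the key observation is that $J(a,b)$ can be written as a surface divergence. Using $\curl\psi=-\vec n\times\nabla\psi$ together with the identity \eqref{equal1}, one rewrites $\vec n\times\nabla a=-\curl a$, so that $J(a,b)=-\curl a\cdot\nabla b$. Then I would invoke the integration-by-parts formula $\left<\curl\vec\psi,v\right>_{L^2({\bf TM})}=\left<\vec\psi,\curl_n v\right>_{L^2({\bf M})}$ stated in the excerpt, applied with $\vec\psi\leftrightarrow a$ and $v=\nabla b$; since $\curl_n(\nabla b)=0$ (the normal curl of a gradient vanishes, being the manifold analogue of $\curl\nabla\equiv 0$), the integral collapses to zero. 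The second vanishing $\int_{\bf M}J(a,b)b\,\mathrm{dVol}_{\bf M}=0$ follows by the same mechanism applied to $b\nabla b=\tfrac12\nabla(b^2)$: writing $J(a,b)b=-\curl a\cdot\tfrac12\nabla(b^2)$ and integrating by parts against $\curl a$ again produces $\curl_n(\tfrac12\nabla(b^2))=0$, so it vanishes.

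The cyclic identity $\int_{\bf M}J(a,b)c\,\mathrm{dVol}_{\bf M}=\int_{\bf M}J(b,c)a\,\mathrm{dVol}_{\bf M}$ is the substantive step and the one I expect to be the main obstacle, since it is the manifold version of the classical fact that $\int J(a,b)c$ is invariant under cyclic permutation of $(a,b,c)$. The plan is to establish the equivalent \emph{full} antisymmetry $\int_{\bf M}J(a,b)c\,\mathrm{dVol}_{\bf M}=-\int_{\bf M}J(a,c)b\,\mathrm{dVol}_{\bf M}$ by integrating by parts. Concretely, writing $J(a,b)c=-\curl a\cdot(c\,\nabla b)$ and using $c\,\nabla b=\nabla(bc)-b\,\nabla c$, the term with $\nabla(bc)$ integrates to zero exactly as in the previous paragraph (by $\curl_n\nabla\equiv 0$), leaving $\int J(a,b)c=\int\curl a\cdot(b\,\nabla c)=-\int J(a,c)b$. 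Combining this antisymmetry in the last two slots with the antisymmetry $J(a,b)=-J(b,a)$ in the first two slots generates the full alternating symmetry of the trilinear form $(a,b,c)\mapsto\int_{\bf M}J(a,b)c\,\mathrm{dVol}_{\bf M}$, and the desired cyclic identity is one instance of it. The only delicate point to verify carefully is that the integration-by-parts formulas quoted for $\curl$ and $\curl_n$ are valid for the scalar-valued fields involved and that $\curl_n\nabla\equiv 0$ on ${\bf M}$, which is the geometric input that makes every boundary/error term disappear.
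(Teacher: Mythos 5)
Your proof is correct, but there is nothing in the paper to compare it against: the proposition is stated without any proof, as a collection of standard properties of the Jacobian taken from the literature on fluid equations on closed surfaces (cf.\ \cite{Il1990,Il1994}), so your argument fills a gap rather than shadowing the authors' route. Checking your steps: the antisymmetry is indeed immediate from the alternating triple product $J(a,b)=\det(\vec n,\nabla a,\nabla b)$; writing $\vec n\times\nabla a=-\curl a$ via \eqref{equal1} and invoking the paper's integration-by-parts formula $\left<\curl\vec\psi,v\right>_{L^2({\bf TM})}=\left<\vec\psi,\curl_n v\right>_{L^2({\bf M})}$ reduces every integral identity to the single geometric fact $\curl_n\nabla\equiv 0$, which you rightly flag as the delicate point and which does hold: by \eqref{equal1}, $\curl_n\nabla b=-\vec n\,\dive(\vec n\times\nabla b)$, and the skew-gradient $\vec n\times\nabla b$ is divergence-free on any Riemannian surface (equivalently, $J(a,b)\,\mathrm{dVol}_{\bf M}=da\wedge db=d(a\,db)$ is exact, so Stokes' theorem on the closed ${\bf M}$ kills every integral you need killed). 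Your derivation of the cyclic identity is also sound: antisymmetry of $T(a,b,c)=\int_{\bf M}J(a,b)c\,\mathrm{dVol}_{\bf M}$ in the last two slots (your Leibniz computation) combined with antisymmetry in the first two slots makes $T$ fully alternating, and a cyclic permutation is even, whence $T(a,b,c)=T(b,c,a)$. For economy you could note that the single pointwise identity $c\,J(a,b)+b\,J(a,c)=\dive\left(bc\,(\vec n\times\nabla a)\right)$, integrated over the closed manifold, yields all three integral statements at once: $c\equiv 1$ gives the first vanishing (since $J(a,1)=0$), $c=b$ gives the second, and general $c$ gives the antisymmetry in the last two arguments; this divergence-theorem formulation is exactly the computation you performed, repackaged so that no operator identity beyond $\dive(\vec n\times\nabla a)=0$ is ever invoked.
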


\section{Well-posedness and the existence of global attractor}\label{S2}
We consider the existence and uniqueness of the weak solution of the modified Leray-alpha equation under the vectorial form \eqref{HodEq}. The basic method is Galerkin approximation scheme and then passing to the limit using the appropriate Aubin compactness theorems. Since the well-posedness of the $2$-D equation in $\mathbb{R}^2$ with periodic boundary condition was established in \cite{Ti2014} and of the $3$-D equation in $\mathbb{R}^3$ with periodic boundary conditon was treated in \cite{IlLuTi}. Here, we can do by the same way as in \cite{IlLuTi,Ti2014} by establish the $H^1$- and $H^2$-estimates with noting that
$$\left< B(v,u), u \right> = 0.$$
in $H^1$-estimate and the term $\left<B(v,u), Au\right>$ appeared in $H^2$-estimate can be controled by using Young's inequality as
\begin{eqnarray*}
\left|\left<B(v,u), Au\right>\right|_{D(A)'}&\leq& c|v|\norm{v}^{1/2}|A^{3/2}u|\norm{u}\cr
&\leq& c(\lambda_1^{-1}+\alpha^2)|Au|^{1/2}|A^{3/2}u|^{3/2}\norm{u}\cr
&\leq& c(\lambda_1^{-1}+\alpha^2)^4 \frac{\norm{u}^4|Au|^2}{(\nu \alpha^2)^3} + \frac{3\nu\alpha^2}{4}|A^{3/2}u|^2.
\end{eqnarray*}
Therefore, we can get the $H^1$- and $H^2$-estimates as follows (in details see \cite{IlLuTi}):
\begin{equation}\label{H1Es}
|u(t)|^2 + \alpha^2\norm{u(t)}^2 \leq e^{-\nu\lambda_1 t}(|u(0)|^2 + \alpha^2\norm{u(0)}^2) + \frac{K_1}{\nu\lambda_1}(1-e^{-\nu\lambda_1 t}).
\end{equation}
\begin{equation}\label{H2Es}
t(\norm{u(t)}^2 + \alpha^2|Au(t)|^2) \leq \frac{1}{\nu} (tK_1+k_1) + t^2K_2 + (\lambda_1^{-1}+ \alpha^2)^4\frac{2ck_1^2}{(\nu\alpha^2)^4\alpha^4}\left( \frac{t^2K_1}{2} + tk_1 \right).
\end{equation}
Therefore, we can derive the well-posedness of the weak solution of \eqref{HodEq} as in the following theorem.
\begin{theorem}
Let $f \in H$, then for any $T > 0$, Equation \eqref{HodEq} with the initial data $u(0)\in V$ has a unique regular solution $u$ in $[0, T )$. Furthermore, this solution depends continuously on the initial data as a map from $V$ to $C([0, T ], V )$.
\end{theorem}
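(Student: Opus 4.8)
The plan is to construct the solution by the Galerkin scheme and then to convert the a priori bounds \eqref{H1Es}--\eqref{H2Es} into existence, uniqueness and continuous dependence, exactly in the spirit of \cite{IlLuTi,Ti2014}. First I would fix the orthonormal basis $\{w_j\}_{j\geq 1}$ of $H$ made of the eigenfunctions of the Stokes operator, $Aw_j=\lambda_j w_j$, and let $P_m$ be the orthogonal projection onto $H_m:=\mathrm{span}\{w_1,\dots,w_m\}$. Looking for $u_m(t)=\sum_{j=1}^m g_{jm}(t)\,w_j$ and writing $v_m=(I+\alpha^2A)u_m$, I project the first line of \eqref{HodEq}:
\[
\frac{d}{dt}\langle v_m,w_j\rangle+\nu\langle Av_m,w_j\rangle+\langle B(v_m,u_m),w_j\rangle=\langle f,w_j\rangle,\qquad j=1,\dots,m.
\]
Since $v\mapsto u$ is the bounded invertible map $(I+\alpha^2A)^{-1}$ on $H_m$, this is a system of ordinary differential equations with a quadratic, hence locally Lipschitz, right-hand side in the unknowns $g_{jm}$, and the Cauchy--Lipschitz theorem yields a unique local solution on a maximal interval $[0,T_m)$.

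Next I would promote these to global bounds. Testing the projected system with $u_m$ and invoking the cancellation $\langle B(v_m,u_m),u_m\rangle=0$ reproduces \eqref{H1Es}, which controls $|u_m(t)|^2+\alpha^2\norm{u_m(t)}^2$ uniformly on $[0,T]$; this rules out blow-up, so $T_m=\infty$, and shows that $(u_m)$ is bounded in $L^\infty(0,T;V)$. Testing with $Au_m$ and absorbing $\langle B(v_m,u_m),Au_m\rangle$ by the Young-type inequality displayed before the theorem reproduces the smoothing estimate \eqref{H2Es}; this bounds $(u_m)$ in $L^2(\delta,T;D(A))$ for every $\delta>0$ (the factor $t$ accommodating data merely in $V$), whence $(v_m)$ is bounded in $L^2(\delta,T;H)$. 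Reading $\partial_t v_m$ off the equation then gives a bound for $(\partial_t v_m)$ in $L^2(\delta,T;D(A)')$.

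The main obstacle is the passage to the limit in the nonlinear term $B(v_m,u_m)=\mathbb{P}(v_m\cdot\nabla u_m)$: since $v=(I+\alpha^2A)u$ carries two more derivatives than $u$, weak convergence alone will not identify the limit. Feeding the bounds on $(v_m)$ and $(\partial_t v_m)$ into the Aubin--Lions--Simon compactness lemma, I extract a subsequence with $u_m\rightharpoonup u$ weakly-$*$ in $L^\infty(0,T;V)$, $v_m\rightharpoonup v$ weakly in $L^2(\delta,T;H)$, and $u_m\to u$ strongly in $L^2(\delta,T;V)$. Pairing the Galerkin identity against a fixed $w_k$ and a smooth time weight, the strong convergence of $\nabla u_m$ against the weak convergence of $v_m$ identifies $\langle B(v_m,u_m),w_k\rangle$ with $\langle B(v,u),w_k\rangle$ in the limit, and a density argument over $k$ shows $u$ solves \eqref{HodEq}. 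Finally $u\in L^\infty(0,T;V)$ together with the bound on $v_t$ upgrades weak to strong continuity, giving $u\in C([0,T],V)$.

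For uniqueness and continuous dependence I would take two solutions $u_1,u_2$ with data in $V$, set $w=u_1-u_2$ and $\eta=(I+\alpha^2A)w$, and write the difference equation using $B(v_1,u_1)-B(v_2,u_2)=B(\eta,u_1)+B(v_2,w)$. Testing with $w$ kills $\langle B(v_2,w),w\rangle=0$ by the same antisymmetry (here $\dive v_2=0$), while the remaining trilinear term $\langle B(\eta,u_1),w\rangle$ is controlled by the $V$-regularity of $u_1$ via Young's inequality; Gronwall's inequality then yields
\[
|w(t)|^2+\alpha^2\norm{w(t)}^2\leq e^{C(T)t}\big(|w(0)|^2+\alpha^2\norm{w(0)}^2\big).
\]
Taking $w(0)=0$ gives uniqueness, and the same bound furnishes the Lipschitz dependence of the solution on $u(0)$ as a map $V\to C([0,T],V)$.
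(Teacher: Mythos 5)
Your proposal is correct and takes essentially the same approach as the paper: the paper's proof is a one-line appeal to the Galerkin scheme combined with the $H^1$- and $H^2$-estimates ``in the same way of \cite[Theorem 3]{IlLuTi}'', and your write-up is precisely that program carried out in detail (Galerkin approximation, uniform bounds from \eqref{H1Es}--\eqref{H2Es}, Aubin--Lions compactness to identify the limit of $B(v_m,u_m)$, and a Gronwall argument for uniqueness and Lipschitz dependence in $V$). You have merely supplied the details that the paper delegates to the cited reference.
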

\begin{proof}
The proof is done by using $H^1$-, $H^2$-estimates and the Galerkin approximation scheme in the same way of \cite[Theorem 3]{IlLuTi}.
\end{proof}
Since the well-posedness, we get a semigroup of solution operators, denoted as $\left\{ S(t) \right\}_{t\geq 0}$, which associates, with each $u_0=u(0)\in V$ , the semi-flow for time $t\geq 0$ : $S(t)u_0 = u(t)$ is unique weak solution of \eqref{HodEq}.

Using the $H^1$-estimate \eqref{H1Es} we can prove the existence of a bounded absorbing ball $B_V(0)$ in $V$.
The compactness of the semigroup $\left\{ S(t) \right\}_{t\geq 0}$ and the existence of bounded absorbing
ball $B_V$ guarantee the existence of the nonempty compact global attractor $\mathcal{A}$.
\begin{theorem}
There is a compact global attractor $\mathcal{A}\subset V$ for Equation \eqref{HodEq}.
\end{theorem}
\begin{proof}
Following Rellich lemma $S_t: V \longrightarrow D(A) \Subset V$, for $t>0$, is a compact semigroup from $V$ into itself. Since $S(t)B_{V}(0) \subset B_{V}(0)$, then the set $C_s:= \overline{\cup_{t\geq s}S(t)B_{V}(0)}^{V}$ is nonempty and compact in $V$. By the monotonic property of $C_s$ for $s>0$ and by the finite intersection property of compact sets, the set
$$\mathcal{A} = \cap_{s>0}C_s \subset V$$
is a nonempty compact set, and also the unique global attractor in $V$.
\end{proof}

\section{Dimensions of global attractor on $2$-D closed manifolds}\label{S}
\subsection{Fundamental theorem on the attractor's dimension}
Let $H$ be a Hilbert space, $X$ be a compact set in $H$ and $S_t$ the nonlinear continuous semigroup generated by the evolution equation
$$\partial_tu = F(u), \, u(0)=u_0,$$
and suppose that
$$S_tX=X \hbox{  for  } t\geq 0.$$
The Hausdorff and fractal dimensions of $X$ are estimated by using the uniform Lyapunov exponents (see \cite{Il2001,Il2004}). 
\begin{definition}
The semigroup $S_t$ is uniformly quasi-differentiable on $X$ for each $t$ if for all $u,\, v \in X$ there exists a linear operator $DS_t(u)$ such that
$$\norm{S_t(u)-S_t(v)-DS_t(u)(u-v)} \leq h(r)\norm{u-v},$$
where $\norm{u-v}\leq r$, $h(r)\rightarrow 0$ as $r\rightarrow 0$ and $\sup_{t\in [0,\, 1]} \sup_{u\in X}\norm{DS_t(u)}_{\mathscr{L}(H,H)}<\infty$.
\end{definition}
The following result is establised in \cite[Theorem 2.1]{Il2001}.
\begin{theorem}\label{TheoremDim}
We assume that the mapping $u \rightarrow S_tu_0$ is uniformly quasi-differentiable in $H$ and its quasi-differentiation is a linear operator $L(t,u_0):\zeta\in H \rightarrow U(t)\in H$, where $U(t)$ is the solution of the first variation equation
\begin{equation}\label{HF1}
\partial_t U = \mathscr{L}(t,u_0)U, \, \, U(0)=\zeta.
\end{equation}
We assume, in addition, that for a fixed $t$ the operator $L(t, u_0) = DS_t(u)$ is compact and norm-continuous with respect
to $u \in X$.

For $N\geq 1, \, n\in \mathbb{N},$ we define $q_N$ by
\begin{equation}\label{Lyapunov}
q_N = \limsup_{t\rightarrow\infty}\sup_{u_0\in X}\sup_{\zeta_i\in H,\norm{\zeta_i}\leq 1, i=1,...,N}\left(  \frac{1}{t}\int_0^t\mathrm{Tr}\mathscr{L}(\tau, u_0)\circ Q_N(\tau)d\tau\right),
\end{equation}
where $Q_N(\tau)$ is the orthogonal projection in $H$ into $\mathrm{Span}\left\{U^1(\tau)...U^N(\tau)\right\}$, and $U^i(t)$ is the solution of \eqref{HF1} with $U^i(0)=\zeta_i$. 

Suppose $q_N \leq f(N)$, where $f$ is concave. The Hausdorff and fractal dimensions of $X$ have the same upper bound
$$\dim_H X \leq \dim_F X\leq N_*,$$
where $N_*\geq 1$ is such that $f(N_*) = 0$.
\end{theorem}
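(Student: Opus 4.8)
The plan is to follow the Constantin--Foias--Temam volume-contraction method together with the Douady--Oesterl\'e theorem, adapted to the present uniform quasi-differentiable setting. First I would pass to exterior powers. For the linearized operator $L(t,u_0)=DS_t(u)$ I consider its $N$-th exterior power $\Lambda^N L(t,u_0)$ acting on $\Lambda^N H$ and set
\[
\omega_N(t)=\sup_{u_0\in X}\;\norm{\Lambda^N L(t,u_0)}_{\mathscr{L}(\Lambda^N H,\Lambda^N H)},
\]
which measures the maximal growth of $N$-dimensional volume elements transported by the flow. The uniform bound $\sup_{t\in[0,1]}\sup_{u_0\in X}\norm{DS_t(u_0)}_{\mathscr{L}(H,H)}<\infty$ from the quasi-differentiability hypothesis, combined with compactness of $L(t,u_0)$, guarantees that $\omega_N(t)$ is finite and submultiplicative, $\omega_N(t+s)\le\omega_N(t)\,\omega_N(s)$; hence $\bar\omega_N:=\lim_{t\to\infty}\omega_N(t)^{1/t}$ exists.

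The second step is the Liouville trace formula connecting $\omega_N$ to $q_N$. For an $N$-tuple of solutions $U^1(t),\dots,U^N(t)$ of the first-variation equation \eqref{HF1}, the volume $V_N(t)=\norm{U^1(t)\wedge\cdots\wedge U^N(t)}$ satisfies
\[
V_N(t)=V_N(0)\,\exp\!\left(\int_0^t\mathrm{Tr}\big(\mathscr{L}(\tau,u_0)\circ Q_N(\tau)\big)\,d\tau\right),
\]
where $Q_N(\tau)$ is the orthogonal projection onto $\mathrm{Span}\{U^1(\tau),\dots,U^N(\tau)\}$. Taking the supremum over $u_0\in X$ and over admissible initial frames $\zeta_i$ and then the $\limsup$ in $t$ identifies the exponential growth rate of volumes with the quantity defining $q_N$ in \eqref{Lyapunov}; precisely, $\log\bar\omega_N\le q_N$, so that $q_N<0$ forces uniform contraction of $N$-dimensional volume elements.

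Third, I would convert volume contraction into a dimension bound. If $q_N\le f(N)$ with $f$ concave and $f(N_*)=0$, then the integer exponents are controlled by $f$, but to reach the possibly non-integer threshold $N_*$ I interpolate: for $d=N+s$ with $0\le s<1$, set $q_d=(1-s)q_N+s\,q_{N+1}$, so that concavity of $f$ gives $q_d\le f(d)$; since $f$ passes to negative values beyond its zero, $q_d<0$ for every $d>N_*$. Uniform contraction of $d$-dimensional volume elements for all such $d$ is exactly the hypothesis of the Douady--Oesterl\'e theorem, which yields $\dim_H X\le N_*$.

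Finally, for the fractal (box-counting) bound I would invoke the sharper covering estimate behind \cite[Theorem 2.1]{Il2001}: uniform contraction of $d$-dimensional volume elements for every $d>N_*$, made effective through the concavity of $f$, controls the number of balls needed to cover $S_tX=X$ at each dyadic scale and yields $\dim_F X\le N_*$ as well. I expect the fractal-dimension half to be the main obstacle. The Hausdorff bound is the classical Douady--Oesterl\'e conclusion, whereas the box-counting bound uses the concavity of $f$ in an essential way, both to interpolate between the integer volume-contraction rates and to turn them into uniform covering-number estimates; it is precisely here that the uniformity of $q_N$ in both $u_0\in X$ and the choice of frame $\{\zeta_i\}$ becomes indispensable.
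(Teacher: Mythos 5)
The first thing to note is that the paper does not prove this theorem at all: it is quoted verbatim from Chepyzhov--Ilyin \cite[Theorem 2.1]{Il2001} (with the variant replacing concavity by uniform volume contraction in \cite[Theorem 2.1]{Il2004}), so there is no in-paper argument to compare yours against; the comparison has to be with the cited proof. Against that benchmark, your outline of the Hausdorff half is the standard and correct route: exterior powers and submultiplicativity of $\omega_N(t)$ (which, note, needs only the chain rule $DS_{t+s}(u_0)=DS_t(S_su_0)\circ DS_s(u_0)$ and the invariance $S_tX=X$, not compactness), the Liouville trace formula identifying volume growth rates with the quantities $q_N$, the interpolation $q_d\le(1-s)q_N+s\,q_{N+1}\le f(d)$ for $d=N+s$ by concavity, the observation that a concave $f$ positive somewhere below $N_*$ is strictly negative beyond $N_*$, and then Douady--Oesterl\'e. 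This is exactly the Constantin--Foias--Temam scheme on which the cited proof also rests.

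The gap is in the fractal half, and you half-acknowledge it yourself: you ``invoke the sharper covering estimate behind \cite[Theorem 2.1]{Il2001}''. That is circular---the statement you are asked to prove \emph{is} \cite[Theorem 2.1]{Il2001}, and its entire point is that the fractal (not merely Hausdorff) dimension is bounded by the same root $N_*$ when $f$ is concave; the classical volume-contraction results give a strictly worse fractal bound, so the fractal estimate cannot be obtained by simply pointing at ``the machinery''. The content you would need to supply is the covering argument: under $S_t$ a ball of radius $r$ centered on $X$ is mapped into a neighborhood of an ellipsoid whose semiaxes are governed by the singular values of $DS_t(u_0)$; one must count how many balls of radius proportional to the $(N+1)$-st singular value times $r$ (inflated to absorb the quasi-differentiability error $h(r)\norm{u-v}$) cover that image, and concavity of $f$ is precisely what makes the resulting recursive covering estimate close at the exponent $N_*$ uniformly over $X$ and over scales. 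Without reproducing that argument or an equivalent one, your proposal establishes the Hausdorff bound but merely re-cites the fractal bound. Since the paper itself also only cites \cite{Il2001} here, this is not a defect relative to the paper---but as a standalone proof it is incomplete at exactly the step that makes the theorem worth stating.
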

The concave condition of $f$ can be replaced by the condition that the quasi-differential $DS_t(u)$ contracts $N_*$-dimensional volumes uniformly for $u \in X$ (see \cite[Theorem 2.1]{Il2004}).

\subsection{Estimate of the attractor's dimensions}\label{Dim}

\subsubsection{Upper bound}\label{S3}
As the previous sections we denote ${\bf M}$ for both $\mathbb{S}^2$ and $\mathbb{T}^2$. The upper bound of the Hausdorff and fractal dimensions of the global attractor of the $3$-D modified Leray-alpha equation with periodic boundary condition were establised in \cite{IlLuTi} by using the Leib-Sobolev-Thirring inequality. However, we will derive the upper bound of the $2$-D equation on ${\bf M}$ by another method based on the vorticity scalar equation in this section.

We multiply \eqref{ModCH2} by $\varphi$ in $L^2({\bf M})$ we obtain that
\begin{equation*}
\frac{1}{2}\frac{d}{dt}\left( |\varphi|^2 + |\nabla\varphi|^2 \right) + \nu(|\nabla\varphi|^2 + \alpha^2|\Delta\varphi|^2) = \left<\curl_n f,\varphi \right>= \left< f, \curl_n\varphi \right>.
\end{equation*}
Therefore,
\begin{equation*}
\frac{d}{dt} (|\varphi|^2+ \alpha^2|\nabla\varphi|^2) + 2\nu(|\nabla\varphi|^2 + \alpha^2|\Delta\varphi|^2) \leqslant \frac{|f|^2}{\nu} + \nu|\nabla\varphi|^2.
\end{equation*}
Using the Poincar\'e and Gronwall inequalities and integrating with respect to $t$ yield
\begin{equation}\label{Evar}
\limsup_{t\to\infty}|\varphi(t)|^2 \leqslant \frac{|f|^2}{\lambda_1\nu^2}
\end{equation}
and
\begin{equation}\label{Evarphi}
\limsup_{t\to\infty}\frac{1}{t}\int_0^t|\nabla\varphi(\tau)|^2d\tau \leqslant \frac{|f|^2}{\nu^2}.
\end{equation}

We consider the variational equation corresponding to  \eqref{ModCH3}:
\begin{equation}\label{VarEq}
\Phi_t = \Delta \Phi - (I-\alpha^2\Delta)^{-1}J((\Delta^{-1}(I-\alpha^2\Delta)\Phi, \varphi) - (I-\alpha^2\Delta)^{-1}J((\Delta^{-1}(I-\alpha^2\Delta)\varphi, \Phi),
\end{equation}
where $\Phi(0) = \zeta$.

It is standard to show that this equation has a unique solution denoted by
$$L(t, \varphi(0))\zeta := \Phi(t).$$
Using the general theorems in \cite{Te1984} we can show that the semigroup $S_t$ is uniformly quasi-differentiable on the attractor $\mathcal{A}$ of the modified Leray-alpha equation.

Now we establish the Hausdorff and fractal dimensions of the attractor using \eqref{VarEq} in the following theorem:
\begin{theorem}\label{HF}
The Hausdorff and fractal dimension of the attractor $\mathcal{A}$ of the modified Leray-alpha equation on ${\bf M}$ are finite and satisfy 
\begin{equation}\label{Upper1}
\dim_H\mathcal{A} \leqslant \dim_F \mathcal{A} \leqslant G^{2/3} \left(\frac{(4+\epsilon_G)^3}{3L(1+\alpha^2\lambda_1)}(\log G - \frac{1}{2}\log \frac{L}{2})\right)^{1/3},
\end{equation}
\begin{equation}\label{Upper2}
\dim_H\mathcal{A} \leqslant \dim_F \mathcal{A} \leqslant \left( \frac{12}{\sqrt{L(1+\alpha^2\lambda_1)}} \right)^{2/3} G^{2/3} \left( \log G + \frac{1}{2} + \log \frac{3\sqrt{2}}{\sqrt{L (1+\alpha^2\lambda_1)}}\right)^{1/3},
\end{equation}
where $G= \dfrac{|f|}{\nu^2\lambda_1}$ is the Grashof number and $\epsilon_G \to 0$, when $G\to \infty$. In particular, the constant $L=\pi$ in the case of the sphere $\mathbb{S}^2$. 
\end{theorem}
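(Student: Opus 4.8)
The plan is to apply Theorem \ref{TheoremDim} to the scalar semiflow generated by \eqref{ModCH3}, so that the entire task collapses to estimating the quantity $q_N$ defined in \eqref{Lyapunov} for the linearized operator $\mathscr{L}(\tau,\varphi)$ read off from the variational equation \eqref{VarEq}, and then determining the first index $N_*$ at which the resulting concave majorant of $q_N$ vanishes. Since the quasi-differentiability on $\mathcal{A}$ has already been granted, only the trace estimate and the subsequent optimization remain.

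First I would fix the phase space to be $L^2(\mathbf{M})$ endowed with the weighted inner product $((a,b)) = \langle a,b\rangle + \alpha^2\langle\nabla a,\nabla b\rangle = \langle (I-\alpha^2\Delta)a,b\rangle$, which is precisely the quantity controlled by the energy estimates \eqref{Evar} and \eqref{Evarphi}. The virtue of this choice is that the pure transport term in \eqref{VarEq} drops out of the trace, because $(((I-\alpha^2\Delta)^{-1}J(g,\Phi),\Phi)) = \langle J(g,\Phi),\Phi\rangle = 0$ by the antisymmetry of $J$ recorded in the proposition above. Taking a family $\{\phi_j\}_{j=1}^N$ orthonormal for $((\cdot,\cdot))$ and spanning the range of $Q_N(\tau)$, the trace then splits cleanly as
$$\mathrm{Tr}(\mathscr{L}(\tau,\varphi)\circ Q_N) = -\nu\sum_{j=1}^N\left(|\nabla\phi_j|^2+\alpha^2|\Delta\phi_j|^2\right)-\sum_{j=1}^N\langle J(\Delta^{-1}(I-\alpha^2\Delta)\phi_j,\varphi),\phi_j\rangle,$$
in which the dissipative block is manifestly negative and the nonlinear block is the only term requiring work.

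Next I would treat the two blocks separately. For the nonlinear block I would use the cyclic identity $\int J(a,b)c = \int J(b,c)a$ together with $\int J(\varphi,\phi_j)\phi_j = 0$; since $\Delta^{-1}(I-\alpha^2\Delta)\phi_j = \Delta^{-1}\phi_j-\alpha^2\phi_j$, the $-\alpha^2\phi_j$ contribution cancels and one is left with $\sum_j\int J(\varphi,\phi_j)\Delta^{-1}\phi_j$, which I would majorize pointwise by $|\nabla\varphi|$ times a density of the $\phi_j$ and estimate by Cauchy--Schwarz, reducing matters to controlling a quantity of the form $\int\rho^2$ with $\rho=\sum_j|\nabla\phi_j|^2$. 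The decisive input here is the two-dimensional collective (Lieb--Thirring / Sobolev) inequality \emph{with its logarithmic correction} on the curved manifold $\mathbf{M}$, which is the source of the $\log G$ factors in \eqref{Upper1} and \eqref{Upper2}. For the dissipative block I would observe that, under the weighted orthonormality, minimizing $\sum_j(|\nabla\phi_j|^2+\alpha^2|\Delta\phi_j|^2) = \sum_j\langle(-\Delta)(I-\alpha^2\Delta)\phi_j,\phi_j\rangle$ is governed by the ordinary Laplace eigenvalues, so the Berezin--Li--Yau bound gives a genuinely quadratic lower bound $\sum_{j=1}^N\lambda_j\gtrsim L\,N^2$ (with $L=\pi$ on $\mathbb{S}^2$); comparing this with the $H^1$-type norm appearing in the Lieb--Thirring estimate produces the factor $(1+\alpha^2\lambda_1)$. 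A Young splitting then absorbs the nonlinear block into the dissipation.

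Finally I would collect the estimates into a concave majorant $f(N) = -c_1 N^2 + c_2\,|\nabla\varphi|\,N\,(\log N + \cdots)^{1/2}+\cdots$, take the limsup of its time average, and insert the a priori bound \eqref{Evarphi}, namely $\limsup_{t\to\infty}\frac1t\int_0^t|\nabla\varphi(\tau)|^2\,d\tau\leq |f|^2/\nu^2$, which replaces $|\nabla\varphi|^2$ by $|f|^2/\nu^2$ and expresses everything through the Grashof number $G=|f|/(\nu^2\lambda_1)$, yielding $q_N\leq f(N)$. Solving $f(N_*)=0$ gives the balance $N_*^2\sim G^2\log N_*$, hence $N_*\sim G^{2/3}(\log G)^{1/3}$, and carrying the constants faithfully through the optimization produces the two explicit forms \eqref{Upper1} and \eqref{Upper2}. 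The main obstacle is exactly the third step: establishing the sharp two-dimensional logarithmic Lieb--Thirring inequality on $\mathbf{M}$ and then tracking every constant through the Young absorption and the optimization, since it is this bookkeeping that separates \eqref{Upper1} from \eqref{Upper2} and pins down the value $L=\pi$ on the sphere.
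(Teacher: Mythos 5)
Your proposal follows essentially the same route as the paper's proof: work in $\mathbb{H}$ with the weighted inner product $\left<\left<x,y\right>\right>=\left<x,y\right>-\alpha^2\left<x,\Delta y\right>$ so that the $(I-\alpha^2\Delta)^{-1}$ prefactor drops out of the trace, kill the transport-type terms by the antisymmetry/cyclicity of $J$, bound the surviving nonlinear trace term by a log-corrected collective inequality, bound the dissipative block from below by eigenvalue counting, and conclude via \eqref{Evarphi} and Theorem \ref{TheoremDim} — which is exactly the paper's argument (the paper then defers the final optimization to \cite[Theorem 4.4]{Xuan2021}). The one discrepancy is bookkeeping: in the paper both $L$ and the factor $(1+\alpha^2\lambda_1)$ enter through the collective estimate \eqref{INE}, an $L^\infty$ bound (quoted from \cite[Appendix]{Xuan2021}) on the smoothed velocity density $\rho=\sum_i|n\times\nabla(\Delta-\alpha^2\Delta^2)^{-1}\theta_i|^2$ for a $\left<\left<\cdot,\cdot\right>\right>$-orthonormal family, not through an $L^2$ Lieb--Thirring bound on $\sum_j|\nabla\phi_j|^2$, while the spectral counting on $\mathbb{S}^2$ contributes only $\sum_{i\leq N}\lambda_i\geq\frac{\lambda_1}{4}N^2$ rather than the constant $L=\pi$ you attribute to Berezin--Li--Yau.
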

\begin{proof}
Let
$$\mathbb{H} = L^2({\bf M})\cap \left\{ \varphi: \int_{\bf M} \varphi \mathrm{dVol}_{\bf M} =0 \right\} \hbox{  and  } \mathbb{H}^1 = H^1({\bf M}) \cap \mathbb{H}.$$
Putting
$$\left<\left< x, y\right>\right> = \left< x,y\right> - \alpha^2\left< x,\Delta y\right>.$$
In the space $Q_N(\tau)(\mathbb{H})$ we take an orthonormal basis $\left\{\theta_i \right\}_{i=1}^N \subset \mathbb{H}^1$ with norm $\left<\left<.,.\right>\right>$. Now we have
\begin{eqnarray}\label{OperatorTrace1}
&&\mathrm{Tr}\mathscr{L}(\tau,\varphi_0)\circ Q_N(\tau) = \sum_{i=1}^N \left<\left<\mathscr{L}(\tau,\varphi_0)\theta_i, \theta_i \right>\right> \cr
&=& -\nu\sum_{i=1}^N \left<\left< \Delta\theta_i,\theta_i \right>\right> \cr
&&- \sum_{i=1}^N \left<\left<(I-\alpha^2\Delta)^{-1}J(\Delta^{-1}(I-\alpha^2\Delta)\theta_i,\varphi)+ (I-\alpha^2\Delta)^{-1}J(\Delta^{-1}(I-\alpha^2\Delta)\varphi,\theta_i), \theta_i \right> \right>\cr
&=& -\nu\sum_{i=1}^N(|\nabla\theta_i|^2 +|\Delta\theta_i|^2 ) - \sum_{i=1}^N \left<J(\Delta^{-1}(I-\alpha^2\Delta)\theta_i,\varphi)+ J(\Delta^{-1}(I-\alpha^2\Delta)\varphi,\theta_i), \theta_i \right> \cr
&=& -\nu\sum_{i=1}^N (|\nabla\theta_i|^2 +|\Delta\theta_i|^2 )   - \sum_{i=1}^N \left<J(\Delta^{-1}\theta - \alpha^2\theta_i,\varphi), \theta_i \right>\cr
&\leq& -\nu\sum_{i=1}^N (|\nabla\theta_i|^2 +|\Delta\theta_i|^2 )   - \int_M \sum_{i=1}^N \theta_i (n\times\nabla(I - \alpha^2\Delta)^{-1}\theta_i)\cdot \nabla\varphi dx \cr
&&+  \alpha^2\sum_{i=1}^n\left< J(\theta_i,\varphi),\theta_i\right>\cr
&\leq& -\nu\sum_{i=1}^N (|\nabla\theta_i|^2 +|\Delta\theta_i|^2 )   + \int_M \left(\sum_{i=1}^N \theta^2_i \right)^{1/2} \left(\sum_{i=1}^N |v_i|^2 \right)^{1/2}|\nabla\varphi| dx\cr
&&+ \alpha^2\sum_{i=1}^n\left< J(\varphi,\theta_i),\theta_i\right>\cr
&\leq& -\nu\sum_{i=1}^N (|\nabla\theta_i|^2 +|\Delta\theta_i|^2 )   + \norm{\rho}^{1/2}_\infty\left(\sum_{i=1}^N |\theta_i|^2 \right)^{1/2}|\nabla\varphi| \,\,\hbox{   (due to   } \int_{\bf M} J(\varphi,\theta_i)\theta_i \mathrm{dVol_{\bf M}} = 0) \cr
&\leq& -\nu\sum_{i=1}^N (|\nabla\theta_i|^2 +|\Delta\theta_i|^2 )  + \norm{\rho}^{1/2}_\infty N^{1/2}|\nabla\varphi|,
\end{eqnarray}
where
$$\rho(s) = \sum_{i=1}^N|v_i(s)|^2 = \sum_{i=1}^n |n \times \nabla(\Delta - \alpha^2\Delta^2)^{-1}\theta_i|^2.$$
The following estimate of the function $\rho$ on the $2$-D closed manifold ${\bf M}$ is valid (for details see \cite[Appendix]{Xuan2021}).
\begin{eqnarray}\label{INE}
&&2\sqrt{L(1+\alpha^2\lambda_1)}\norm{\rho}_\infty^{1/2} \leqslant (2\log(k+1)+1)^{1/2} + \sqrt{2}(k+1)^{-1}\left( \lambda_1^{-1}\sum_{i=1}^N|\nabla\theta_i|^2 \right)^{1/2} \cr
&\leqslant& (2\log(k+1)+1)^{1/2} + \sqrt{2}(k+1)^{-1}\left( \lambda_1^{-1}\sum_{i=1}^N(|\nabla\theta_i|^2 + \alpha^2|\Delta\theta_i|^2) \right)^{1/2},
\end{eqnarray}
where $k$ is a positive integer and $L$ is a positive constant ($L=\pi$ in the case of $\mathbb{S}^2$).

Since on the $S^2$ the eigenvalues of $\Delta$ are $\lambda_n = n(n+1)$ of multiplicity $2n+1$ for $n=1,2,...$, we have
$$T(t,\varphi_0):= \sum_{i=1}^N(|\nabla\theta_i|^2 + \alpha^2|\Delta\theta_i|^2) \geqslant \sum_{i=1}^N  \lambda_i \geqslant \frac{\lambda_1}{4} N^2.$$
Hence
$$N \leqslant 2((\lambda_1)^{-1}T)^{1/2}.$$
Equation \eqref{OperatorTrace1} implies now,
\begin{eqnarray*}
&&\mathrm{Tr}\mathscr{L}(\tau,\varphi_0)\circ Q_N(\tau) \leqslant -\nu\lambda_1 (\lambda_1^{-1} T) \cr
&&+ L^{-1/2}(1+\alpha^2\lambda_1)^{-1/2} \left( (2\log(k+1)+1)^{1/2} + \sqrt{2}(k+1)^{-1}(\lambda_1^{-1}T) \right)(\lambda_1^{-1}T)^{1/4}|\nabla\varphi|^2.
\end{eqnarray*}
Since we obtain the same bounded function of $\mathrm{Tr}\mathscr{L}(\tau,\varphi_0)\circ Q_N(\tau)$ such as the one of the simplified Bardina equation, the rest of the proof can be done by the same way in \cite[Theorem 4.4]{Xuan2021} and we get the upper bounds \eqref{Upper1} and \eqref{Upper2} in our theorem.
\end{proof}
\begin{remark}\label{Remark}
In the above theorem we prove that the upper bound of the Hausdorff and fractal dimensions of the global attractor is coincided to the ones of the simplified Bardina equation obtained in \cite{Xuan2021}. In particular, as $\alpha$ tends to zero we get the same upper bound of the Haussdorff and fractal dimensions of the global attractor for the Navier-Stokes equation on $\mathbb{S}^2$ (see \cite{Il1994,Il1999}). Our theorem can be also extended to the two dimensional closed manifolds which have the non trivial harmonic forms as well as \cite[Theorem 4.6]{Xuan2021}.
\end{remark}

\subsubsection{Lower bound}\label{S4}
Since a global attractor is a maximal strictly invariant compact set, it follows that the attractor contains the unstable manifolds of stationary points, that is the invariant manifolds along which the solutions convergence exponentially to the stationary points as $t$ tends to infinity. From this point we can establish the lower bound of the attractor's dimension on the square torus $\mathbb{T}^2=[0; \, 2\pi]\times[0; \, 2\pi]$ by constructing a family of stationary solutions arising from the family of Kolmogorov flows. Recall that the scalar vorticity form of the equation is
\begin{equation*}
(\varphi_t - \alpha^2 \Delta \varphi_t) - \nu\Delta(\varphi - \alpha^2 \Delta \varphi) + J(\Delta^{-1}(I-\alpha^2\Delta)\varphi,\varphi) = \curl_n f. 
\end{equation*}
Putting $\psi = \varphi -  \alpha^2\Delta\varphi$, then
\begin{equation}\label{LowerEq}
\psi_t - \nu\Delta\psi + J(\Delta^{-1}\psi,(I-\alpha^2\Delta)^{-1}\psi) = \curl_n f.
\end{equation}

We consider the following family of forces depending on the integer parameter $s$:
\begin{align*}
f=f_s =\begin{cases}
f_1 = \frac{1}{\sqrt{2}\pi}\nu^2\lambda s^2\sin s x_2,\\
f_2 =0,
\end{cases}
\end{align*}
where we choose the parameter $\lambda:=\lambda(s)$ later. Then, we have
$$|f| = \nu^2\lambda s^2, \, G= \lambda s^2$$
and
\begin{equation}\label{flow}
\curl_n f_s = F_s = -\frac{1}{\sqrt{2}\pi}\nu^2\lambda s^3\cos sx_2, \, |\curl_nf| = \nu^2\lambda s^3.
\end{equation}
Corresponding to the family \eqref{flow} is the family of stationary solutions
\begin{equation*}
\psi_s = -\frac{1}{\sqrt{2}\pi}\nu\lambda s\cos sx_2
\end{equation*}
of Equation \eqref{LowerEq} due to $\psi_s$ depends only on $x_2$, the nonlinear term vanishes
$$J(\Delta^{-1}\psi_s,(I-\alpha^2\Delta)^{-1}\psi_s)=0$$
and the equality $-\nu \Delta\psi_s = F_s$ is verified directly.

We linearize \eqref{LowerEq} about the stationary solution \eqref{flow} and consider the eigenvalue
problem
\begin{eqnarray}\label{EP}
\mathcal{L}_s\psi :&=& J(\Delta^{-1}\psi_s,(I-\alpha^2\Delta)^{-1}\psi) \cr
&&+ J(\Delta^{-1}\psi,(I-\alpha^2\Delta)^{-1}\psi_s) - \nu\Delta\psi = -\sigma\psi.
\end{eqnarray}
We use the orthonormal basis of trigonometric functions, which are the eigenfunctions of the Laplacian on the two-dimensional torus,
$$\left\{ \frac{1}{\sqrt{2}\pi}\sin kx, \frac{1}{\sqrt{2}\pi}\cos kx  \right\}, \, kx = k_1x_1+k_2x_2,$$
$$k \in \mathbb{Z}^2_+ = \left\{ k\in \mathbb{Z}_0^2| k_1\geq 0,\, k_2\geq 0 \right\}\cup \left\{k\in \mathbb{Z}_0^2| k_1\geq 1, k_2\leq 0  \right\}$$
and we rewrite $\psi$ as a Fourier series
$$\psi = \frac{1}{\sqrt{2}\pi}\sum_{k\in \mathbb{Z}_+^2} a_k \cos kx + b_k \sin kx.$$
Since $J(a,b)=-J(b,a)$, we have
\begin{eqnarray*}
&&J(\Delta^{-1}\cos sx_2, (I-\alpha^2\Delta)^{-1}\cos kx) + J(\Delta^{-1}\cos kx, (I-\alpha^2\Delta)^{-1}\cos sx_2)\cr
&=& \frac{\nu \lambda s}{\sqrt{2}\pi}\left( \frac{1}{s^2}\frac{1}{1+\alpha^2k^2} - \frac{1}{k^2}\frac{1}{1+\alpha^2 s^2} \right) J(\cos sx_2, a_k cos kx + b_k \sin kx)\cr
&=& \frac{\nu \lambda s}{\sqrt{2}\pi}\frac{k^2-s^2}{(s^2+\alpha^2s^4)(k^2+\alpha^2k^4)}J(\cos sx_2, a_k cos kx + b_k \sin kx).
\end{eqnarray*}
Plugging this into \eqref{EP} we obtain that
\begin{eqnarray}\label{EP1}
\frac{\lambda s}{\sqrt{2}\pi (s^2+ \alpha^2s^4)}&&\sum_{k\in \mathbb{Z}_+^2} \left( \frac{k^2-s^2}{k^2+\alpha^2k^4} \right)J(\cos sx_2, a_k \cos kx + b_k \sin kx)+\cr
&&+ \sum_{k\in \mathbb{Z}_+^2}(k^2+ \hat{\sigma})(a_k\cos kx + b_k \sin kx)=0,
\end{eqnarray}
where $\hat{\sigma}=\sigma/\nu$.

We can calculate that
\begin{eqnarray*}
J(\cos s x_2, \cos(k_1x_1+k_2x_2)) &=& -k_1s \sin sx_2 \sin(k_1x_1 + k_2x_2)\cr
&=& \frac{k_1 s}{2} (\cos (k_1x_1 +(k_2+s)x_2)) -\cos (k_1x_1+(k_2-s)x_2)
\end{eqnarray*}
and
\begin{eqnarray*}
J(\cos s x_2, \sin(k_1x_1+k_2x_2)) &=& k_1s \sin sx_2 \cos(k_1x_1 + k_2x_2)\cr
&=& \frac{k_1 s}{2} (\sin (k_1x_1 +(k_2+s)x_2)) - \sin (k_1x_1+(k_2-s)x_2).
\end{eqnarray*}
Substituting these equalities into \eqref{EP1} and regroup the terms with $\cos(k_1x_1+k_2x_2)$, we get the following equation for the coefficients $a_{k_1,k_2}$ 
\begin{eqnarray*}
&&-\Lambda(s)k_1 \left( \frac{k_1^2+(k_2+s)^2-s^2}{k_1^2+(k_2+s)^2 + \alpha^2(k_1^2 + (k_2+s)^2)^2} \right) a_{k_1k_2+s}\cr
&&+\Lambda(s)k_1 \left( \frac{k_1^2+(k_2-s)^2-s^2}{k_1^2+(k_2-s)^2 + \alpha^2(k_1^2 + (k_2-s)^2)^2} \right) a_{k_1k_2-s} + (k^2+\hat{\sigma})a_{k_1k_2} =0,
\end{eqnarray*}
where 
\begin{equation}
\Lambda = \Lambda(s):= \frac{s^2\lambda}{2\sqrt{2}\pi (s^2+ \alpha^2s^4)} = \frac{\lambda }{2\sqrt{2}\pi(1+\alpha^2 s^2)}.
\end{equation}
Similarly the equation for $b_{k_1,k_2}$ has also this form.

We put
$$a_{k_1k_2} \left( \frac{k^2-s^2}{k^2+ \alpha^2k^4} \right) =: c_{k_1k_2}.$$
and
$$k_1 = t, \, k_2= sn+r, \hbox{  and  } c_{t \, sn+r}= e_n,$$
$$t=1,2,..., \, r \in \mathbb{Z}, \, r_{\min} < r < r_{\max},$$
where the numbers $r_{\min}$ and $r_{\max}$ satisfy that $r_{\max} - r_{\min} <s$ and will be specified below we obtain for each $t$ and $r$ the following three term recurrence relation:
\begin{equation}\label{d1}
d_ne_n + e_{n-1} - e_{n+1} =0 , \, n=0,\pm 1,\pm 2,...,
\end{equation}
where
\begin{equation}\label{d2}
d_n = \frac{(t^2+(sn+r)^2+ \alpha^2(t^2+(sn+r)^2)^2)(t^2+(sn+r)^2+\hat{\sigma})}{\Lambda t(t^2+(sn+r)^2-s^2)}.
\end{equation}
We look for non-trivial decaying solutions $\left\{ e_n \right\}$ of \eqref{d1} and \eqref{d2}. Each nontrivial decaying solution with $\mathrm{Re}(\hat{\sigma})>0$ produces an unstable eigenfunction $\psi$ of the eigenvalue problem \eqref{EP}.  
\begin{theorem}\label{THLower2D}
Given an integer $s>0$ let a pair of integers $t,\, r$ belong to a bounded region $A(\delta)$ given by
\begin{gather}\label{Con}
t^2+r^2<s^2/3, \, t^2+(-s+r)^2>s^2, \, t^2+(s+r)^2>s^2, \, t\geqslant \delta s,\cr
r_{\min}<r<r_{\max}, \, r_{\min} = -s/6, \, r_{\max}=s/6, \, 0<\delta<1/\sqrt{3}.
\end{gather}
For any $\Lambda=\frac{\lambda }{2\sqrt{2}\pi(1+\alpha^2 s^2)}>0$ there exists a unique real eigenvalue $\hat{\sigma} = \hat{\sigma}(\Lambda)$, which increases monotonically as $\Lambda\to \infty$ and satisfies the following inequality
\begin{equation}\label{Estimate}
c_1(\alpha,t,r,s)\Lambda < \hat{\sigma} < c_2(\alpha,t,r,s)\Lambda. 
\end{equation}
The unique $\Lambda_0 = \Lambda_0(s)$ solving the equation
$$\hat{\sigma}(\Lambda_0) = 0$$
satisfes the two-sided estimates
\begin{gather}\label{LU}
\frac{1}{\sqrt 2}\delta^2s(1+\alpha^2s^2) < \Lambda < \frac{55\sqrt 5}{63\sqrt 2}\frac{s(1+\alpha^2s^2)}{\delta^2} \hbox{    for    } \alpha \geqslant 0,\cr
\frac{1}{\sqrt 2}\delta^2s < \Lambda < \frac{5}{3\sqrt 3}\frac{s}{\delta^2} \hbox{    for    } \alpha =0.
\end{gather}
In the term of $\lambda$ these inequalities are
\begin{gather*}
2\pi\delta^2s(1+\alpha^2s^2)^2 < \lambda < \frac{110\sqrt 5\pi}{63}\frac{s(1+\alpha^2s^2)^2}{\delta^2} \hbox{    for    } \alpha \geqslant 0,\cr
2\pi\delta^2s < \lambda < \frac{20\pi}{3\sqrt 6}\frac{s}{\delta^2} \hbox{    for    } \alpha =0.
\end{gather*}
\end{theorem}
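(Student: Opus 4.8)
The plan is to reduce the eigenvalue problem \eqref{EP}, now encoded in the three-term recurrence \eqref{d1}--\eqref{d2}, to a single dispersion relation by the Meshalkin--Sinai continued-fraction method, and then to read off existence, uniqueness, monotonicity and the quantitative bounds from the sign pattern of $d_n$ forced by the region $A(\delta)$. First I would set $m_n := t^2+(sn+r)^2$ and $w_n := m_n(1+\alpha^2 m_n)/\big(t(m_n-s^2)\big)$, so that \eqref{d2} becomes $d_n = \Lambda^{-1}w_n(m_n+\hat{\sigma})$. The constraints \eqref{Con} are tailored so that $m_0=t^2+r^2<s^2/3<s^2$ makes $w_0<0$, while $m_{\pm 1}>s^2$ makes $w_{\pm1}>0$, and $m_n\gg s^2$ with $d_n\to+\infty$ for $|n|\geq 2$. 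Hence for every $\hat{\sigma}>-m_0=-\min_n m_n$ one has the single sign change $d_0<0$, $d_n>0$ $(n\neq 0)$, which is exactly the configuration that can support a two-sided decaying eigenvector.

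Because $d_n\to+\infty$, the recurrence \eqref{d1} has on each half-line a one-dimensional space of recessive (decaying) solutions. Passing to ratios of consecutive terms and iterating \eqref{d1}, I would represent these branches by the convergent continued fractions
\[
\mathcal{R}(\hat{\sigma}):=\frac{e_1}{e_0}=\cfrac{-1}{d_1+\cfrac{1}{d_2+\cfrac{1}{d_3+\cdots}}}\in\Big(-\tfrac{1}{d_1},0\Big),
\]
\[
\mathcal{L}(\hat{\sigma}):=\frac{e_{-1}}{e_0}=\cfrac{1}{d_{-1}+\cfrac{1}{d_{-2}+\cfrac{1}{d_{-3}+\cdots}}}\in\Big(0,\tfrac{1}{d_{-1}}\Big).
\]
A nontrivial solution decaying at both ends exists precisely when the two branches match at $n=0$: dividing $e_1=d_0e_0+e_{-1}$ by $e_0$ gives the dispersion relation $d_0(\hat{\sigma})=\mathcal{R}(\hat{\sigma})-\mathcal{L}(\hat{\sigma})$. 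Every real root $\hat{\sigma}>0$ of this equation produces an unstable eigenfunction of \eqref{EP}, which is what the theorem asserts.

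For existence, uniqueness and monotonicity I would set $g(\hat{\sigma}):=d_0(\hat{\sigma})-\mathcal{R}(\hat{\sigma})+\mathcal{L}(\hat{\sigma})$. The term $d_0=\Lambda^{-1}w_0(m_0+\hat{\sigma})$ is strictly decreasing in $\hat{\sigma}$ (since $w_0<0$), running from $0^-$ at $\hat{\sigma}=-m_0$ to $-\infty$. The key lemma is that the continued fractions are monotone in $\hat{\sigma}$: differentiating the recursions $\hat\gamma_n=1/(d_{n+1}+\hat\gamma_{n+1})$ governing $|\mathcal{R}|$ and $\mathcal{L}$, and propagating the resulting estimate from the tail (where $d_n\to\infty$ dominates $d_n'=\Lambda^{-1}w_n$), shows $|\mathcal{R}|$ and $\mathcal{L}$ strictly decreasing in $\hat{\sigma}$; hence $-\mathcal{R}+\mathcal{L}$ is decreasing and $g$ is strictly decreasing. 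Since $g(-m_0^+)>0$ and $g(+\infty)=-\infty$, there is a unique real root $\hat{\sigma}=\hat{\sigma}(\Lambda)$, and it is positive iff $g(0)>0$. The same computation gives $\partial_\Lambda d_0>0$ and $\partial_\Lambda(-\mathcal{R}+\mathcal{L})>0$, so $\partial_\Lambda g>0$; by the implicit function theorem $\hat{\sigma}'(\Lambda)=-\partial_\Lambda g/\partial_{\hat\sigma}g>0$, giving monotone growth in $\Lambda$, and $\Lambda_0$ is the value making $g(0)=0$.

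For the quantitative part, solving the dispersion relation gives $\hat{\sigma}=\Lambda\,w_0^{-1}(\mathcal{R}-\mathcal{L})-m_0$, so the linear bounds \eqref{Estimate} amount to trapping $\mathcal{R}-\mathcal{L}$ between two negative constants in the regime $\hat{\sigma}\sim\kappa\Lambda$, where $d_n\to w_n\kappa$ is bounded and the continued fractions tend to finite nonzero limits; a short bootstrap (assume $\hat{\sigma}$ comparable to $\Lambda$, bound the $d_n$, bound the continued fractions, close the loop) yields $c_1(\alpha,t,r,s)$ and $c_2(\alpha,t,r,s)$. For the threshold, setting $\hat{\sigma}=0$ gives the implicit identity $\Lambda_0^{-1}w_0m_0=\mathcal{R}(0)-\mathcal{L}(0)$; truncating at first order, $\mathcal{R}(0)-\mathcal{L}(0)\approx-\big(1/d_1(0)+1/d_{-1}(0)\big)=-\Lambda_0\big((w_1m_1)^{-1}+(w_{-1}m_{-1})^{-1}\big)$, so $\Lambda_0\approx\big(|w_0|m_0\big/((w_1m_1)^{-1}+(w_{-1}m_{-1})^{-1})\big)^{1/2}$, and inserting the extremal geometry of $A(\delta)$ ($t\geq\delta s$, $m_0<s^2/3$, $m_{\pm1}>s^2$, $|r|<s/6$) together with the factors $(1+\alpha^2 m_n)$ gives the explicit two-sided estimates \eqref{LU} and their $\alpha=0$ specialization. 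The hard part will be exactly this last step: making the continued-fraction truncation rigorous with errors controlled in the favorable direction, and extracting the sharp numerical constants (such as $\tfrac{55\sqrt5}{63\sqrt2}$) from the boundary cases of $A(\delta)$ — the qualitative existence and uniqueness being essentially forced by the single sign change in $\{d_n\}$.
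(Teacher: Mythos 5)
Your proposal is correct in substance and follows essentially the same route as the paper: the paper's own ``proof'' is a one-line deferral to \cite[Theorem 4.8]{Xuan2021}, which is precisely the Meshalkin--Sinai/Liu continued-fraction analysis of the three-term recurrence \eqref{d1}--\eqref{d2} that you reconstruct (sign pattern of $d_n$ forced by $A(\delta)$, recessive branches $\mathcal{R},\mathcal{L}$, the matching condition $d_0=\mathcal{R}-\mathcal{L}$, monotonicity in $\hat{\sigma}$ and $\Lambda$, and truncation of the continued fractions to extract the two-sided bounds on $\Lambda_0$). Your sketch is in fact more detailed than what the paper provides, and it correctly identifies the only genuinely delicate steps (the tail-propagation estimates justifying monotonicity and the sharp constants in \eqref{LU}), which are exactly the computations carried out in the cited reference.
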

\begin{proof}
The proof is done similarly \cite[Theorem 4.8]{Xuan2021} and we obmit.
\end{proof}

In the rest we give the lower bound of the attractor's dimension by using the above theorem.
Since
$$\Lambda=\frac{\lambda }{2\sqrt{2}\pi(1+\alpha^2 s^2)},$$
we rewrite \eqref{LU} in the term of $\lambda(s)$ to see that for
$$\lambda_{\alpha\geq 0} = \frac{110\sqrt{5}\pi}{63}s\delta^{-2}(1+\alpha^2s^2)^2,$$
$$\lambda_{\alpha=0} = \frac{20\pi}{3\sqrt{6}}s\delta^{-2},$$
each point in $(t,\, r)$-plane satisfying \eqref{Con} produces an unstable (positive) eigenvalue
$\hat{\sigma}>0$ of multiplicity two (the equation for the coefficients $b_k$ is the same).
Denoting by $d(s)$ the number of points of the integer lattice inside the region $A(\delta)$ we obviously have
\begin{equation}
d(s):= \sharp \left\{ (t,r)\in D(s) = \mathbb{Z}^2\cap A(\delta) \right\} \simeq a(\delta)s^2 \hbox{   as   } s \to \infty,
\end{equation}
where $a(\delta)s^2=|A(\delta)|$ is the area of the region $A(\delta)$. Therefore
the dimension of the unstable manifold around the stationary solution $\psi_s$ is
at least $2a(\delta)s^2$ and we obtain that
\begin{equation}\label{ED}
\dim \mathcal{A} \geqslant 2d(s)\simeq 2a(\delta) s^2.
\end{equation}

It is reasonable to consider two case:\\
{\bf The case $\alpha=0$.}

We have 
$$G = \lambda_{\alpha=0} s^2 =  \frac{20\pi}{3\sqrt{6}}s^3\delta^{-2}$$
and writing the estimate \eqref{ED} in terms of the Grashof number $G$ we obtain
\begin{eqnarray*}
\mathrm{dim}\mathcal{A} &\geqslant& 2a(\delta) s^2 \simeq 2\left( \frac{3\sqrt{6}}{20\pi} \right)^{2/3} a(\delta)\delta^{4/3}G^{2/3}\cr
\mathrm{dim}\mathcal{A} &\geqslant& 2\left( \frac{3\sqrt{6}}{20\pi} \right)^{2/3} (\max_{0<\delta<1/\sqrt{3}} a(\delta)\delta^{4/3}) G^{2/3} = 0,006 G^{2/3},
\end{eqnarray*}
where $\max_{0<\delta<1/\sqrt{3}} a(\delta)\delta^{4/3} = 0,012$. This is exact the same lower bound obtained for the global attractor's dimensions of the Navier-Stokes equation (see \cite{Liu,Il2004'}).\\
{\bf The case $0< \alpha \ll 1$.}

Here we can obtain the following lower bound for $G \thicksim (1/\alpha)^3$. Let $0<s<1/\alpha$. Then $1+\alpha^2s^2<2$ and
$$G \leqslant \frac{440\sqrt{5}\pi}{63}s^3\delta^{-2}$$
and by the same way as above we obtain that
$$\mathrm{dim}\mathcal{A} \geqslant 2\left( \frac{63}{440\sqrt{5}\pi} \right)^{2/3}(\max_{0<\delta<1/\sqrt{3}} a(\delta)\delta^{4/3}) G^{2/3} = 0,0018 G^{2/3}.$$
In particular, setting $s\simeq 1/\alpha$ we can obtain in term of $\gamma$ that
$$C_1\frac{1}{\alpha^2} \leq \mathrm{dim}\mathcal{A} \leqslant C_2\frac{1}{\alpha^2}\left( \log \frac{1}{\alpha} \right)^{1/3}.$$

\section{The lower bound of global attractor on $\mathbb{T}^3$}\label{S5}
In this section we will develop the method of Ilyin, Zelik and Kostiano in a recent work \cite{Il21} to give the lower bound of the global attractor for the modified Leray-alpha equation on $\mathbb{T}^3=[0,\, 2\pi]^3$. The method uses the Squire's transformation to transform the $3$-D instability analysis to the instability analysis of the transformed $2$-D problem which has obtained in the previous section. To avoid the confusion we denote the unknowns by $\vec{u}$, the components by $u$ and the covariant derivative by $\nabla_x$. 

\subsection{The stationary solutions}
Now we consider the modified Leray-alpha equation \eqref{ModCH} on $\mathbb{T}^3$ with the right hand sides are given by
\begin{align}\label{flow1}
f=f_s =\begin{cases}
f_1 = \frac{1}{\sqrt{2}\pi}\nu^2\lambda s^2\sin s x_3,\cr
f_2 =0,\cr
f_3=0,
\end{cases}
\end{align}
where $\lambda = \lambda(s)$ is chosen latter. Then, we have
$$|f| = \nu^2\lambda s^2, \, G= \lambda s^2$$
and
\begin{equation*}
\curl_n f_s = F_s = -\frac{1}{\sqrt{2}\pi}\nu^2\lambda s^3\cos sx_3, \, |\curl_nf| = \nu^2\lambda s^3.
\end{equation*}
The family of stationary solutions of\eqref{ModCH} corresponding to \eqref{flow1} are
\begin{align}\label{stasol}
\vec{v}_0(x_3) =\begin{cases}
v_0(x_3) = \frac{1}{\sqrt{2}\pi}\nu\lambda \sin s x_3,\cr
0,\cr
0
\end{cases}
\end{align}
Moreover, $\vec{u} = (I-\alpha^2\Delta_x)^{-1}\vec{v}_0 = (u_0,0,0)^T$ depends only on $x_3$ hence $\vec{v}_0\cdot \nabla_x\vec{u}_0 =0$.

We derive the linearized equation of \eqref{ModCH} on the stationary solutions \eqref{stasol} as follows
\begin{align}\label{Lineareq}
\begin{cases}
\partial_t\omega + u_0 \frac{\partial\bar{\omega}}{\partial x_1} + \bar{\omega}_3 \frac{\partial u_0}{\partial x_3}e_1 -\Delta_x\omega + \nabla_x q = 0,\cr
\dive \omega = 0,
\end{cases}
\end{align}
where $e_1=(1,0,0)^T$ and $\bar{\omega} = (I-\alpha^2\Delta_x)^{-1}\omega$ with the assumption
$$\int_{\mathbb{T}^3}\omega(x,t)\mathrm{dx} =0.$$
We consider the solution of \eqref{Lineareq} in the following form
\begin{equation}\label{SolLinear}
\omega(x,t) = (\omega_1(x_3),\omega_2(x_3),\omega_3(x_3))^Te^{i(ax_1+bx_2-act)} \hbox{   and   } q(t)=q(x_3)e^{i(ax_1+bx_2-act)},
\end{equation}
where $a,b \in \mathbb{Z}$ satisfied that $\omega$ and $q$ are $2\pi$-periodic in each $x_i$.

If there exist a solution \eqref{SolLinear} of Equation \eqref{Lineareq}, then at $t=0$ we have that
$$\omega(x,0) = (\omega_1(x_3),\omega_2(x_3),\omega_3(x_3))^T e^{i(ax_1+bx_2)}$$
is a vector-valued eigenfunction of the stationary operator
\begin{equation}\label{SO}
L_3(\vec{v}_0)\omega = u_0\frac{\partial\bar{\omega}}{\partial x_1} + \bar{\omega}_3\frac{\partial u_0}{\partial x_3}e_1 -\Delta_x \omega + \nabla_x q
\end{equation}
and $iac$ is the corresponding eigenvalue. If $\Re(iac) < 0$, then the corresponding mode is unstable.

Plugging \eqref{SolLinear} into \eqref{Lineareq} we obtaint that
\begin{align}\label{Lineareq'}
\begin{cases}
\Delta_x \omega_1 - ia(u_0\bar{\omega}_1-c\omega_1) = iaq + \bar{\omega}_3 u'_0,\cr
\Delta_x \omega_2 - ia(u_0\bar{\omega}_2-c\omega_2) = ibq,\cr
\Delta_x \omega_3 - ia(u_0\bar{\omega}_3-c\omega_3) = q',\cr
ia\omega_1 + ib\omega_2 + \omega'_3=0,
\end{cases}
\end{align}
where we denote $':= \partial / \partial x_3$.
\begin{lemma}\label{unsSol}
There are no unstable solutions of equation \eqref{Lineareq} which can be written by \eqref{SolLinear} at $a=0$.
\end{lemma}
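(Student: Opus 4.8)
The plan is to exploit the fact that setting $a=0$ makes the perturbation independent of the streamwise variable $x_1$, which annihilates the critical advection term and triangularizes the linearized system. First I would record that at $a=0$ the eigenvalue attached to \eqref{SolLinear} is $iac=0$, so a mode of this form can never satisfy $\Re(iac)<0$; hence ``unstable'' must be read through the genuine time evolution of \eqref{Lineareq} restricted to fields with $\partial_{x_1}\omega=0$, and I would prove directly that every such perturbation decays to zero, which rules out growth a fortiori.

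With $a=0$ the term $u_0\,\partial_{x_1}\bar{\omega}$ vanishes and $\nabla_x q$ loses its $x_1$-component, so \eqref{Lineareq} splits. The block $(\omega_2,\omega_3)$ together with $q$ solves the closed, source-free Stokes-type system $\partial_t\omega_\perp-\Delta_x\omega_\perp+\nabla_\perp q=0$ with $\partial_{x_2}\omega_2+\partial_{x_3}\omega_3=0$, which does not feel the base flow at all, while only the first component carries the coupling and obeys the slaved scalar heat equation $\partial_t\omega_1-\Delta_x\omega_1=-\bar{\omega}_3\,u_0'$. This one-way structure is the crux: the indefinite coupling $\bar{\omega}_3\,u_0'$ acts purely as a forcing of $\omega_1$ by $\omega_3$ and never feeds back, so it cannot spoil a dissipative estimate.

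Next I would run the energy estimate on the perpendicular block. Pairing its equation with $\omega_\perp^{*}$ in $L^2(\mathbb{T}^3)$ and taking real parts, the pressure drops by incompressibility, yielding $\tfrac12\tfrac{d}{dt}|\omega_\perp|^2+|\nabla_x\omega_\perp|^2=0$; since $\omega$ has zero mean, Poincaré gives $|\omega_\perp(t)|\leq e^{-\lambda_1 t}|\omega_\perp(0)|$, so in particular $\omega_3\to 0$ exponentially. Because $(I-\alpha^2\Delta_x)^{-1}$ is a bounded smoothing operator, the forcing $\bar{\omega}_3\,u_0'=\big((I-\alpha^2\Delta_x)^{-1}\omega_3\big)\,u_0'$ decays at the same exponential rate; inserting it into the mean-zero scalar heat equation for $\omega_1$ (dissipative at rate $\lambda_1$) and using Duhamel shows $\omega_1\to 0$ as well. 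Hence the whole perturbation decays, and no solution of the form \eqref{SolLinear} with $a=0$ can be unstable.

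The step I expect to demand the most care is the slaving estimate for $\omega_1$: one must verify that convolving the heat semigroup against an exponentially decaying forcing produces no spurious growth (a resonance when the forcing rate coincides with $\lambda_1$ yields at worst a $t\,e^{-\lambda_1 t}$ factor, still decaying), and one must confirm that $\omega_1$ inherits the zero-mean property so that the Poincaré-based decay of its homogeneous part is available. Everything else — the vanishing of the pressure and advection terms at $a=0$, and the boundedness of $(I-\alpha^2\Delta_x)^{-1}$ — is routine.
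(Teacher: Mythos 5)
Your proposal is correct, but it takes a genuinely different route from the paper. The paper stays entirely at the level of the spectral problem: setting $a=0$ in \eqref{Lineareq'} while keeping the eigenvalue as a free parameter, it uses the divergence constraint to write $\omega_2=-\omega_3'/(ib)$, eliminates between the second and third equations to obtain $q''=b^2q$, hence $q=0$ by periodicity, and then invokes injectivity of $\Delta+\sigma$ for $\Re\sigma\neq 0$ (the spectrum of $\Delta$ being real and nonpositive) to conclude $\omega_2=\omega_3=0$ and finally $\omega_1=0$, with the case $a=b=0$ handled separately. You instead run the time evolution on the invariant subspace of $x_1$-independent divergence-free fields: the $(\omega_2,\omega_3)$ block closes into a source-free Stokes system, decays exponentially by the energy identity plus Poincar\'e, and $\omega_1$ is slaved through a heat equation with exponentially decaying forcing. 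Your argument proves a stronger statement (every solution in the $a=0$ sector stays bounded and converges, i.e. the whole sector is linearly stable, not merely free of unstable eigenmodes), and it sidesteps the somewhat awkward bookkeeping of keeping the product $iac$ alive after setting $a=0$; the paper's elimination is shorter, purely algebraic, and yields the clean structural byproduct $q\equiv 0$.

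Two comments on the step you flagged. The zero-mean property you need does hold, but it requires the verification you deferred: incompressibility $\partial_{x_2}\omega_2+\partial_{x_3}\omega_3=0$ and periodicity force the $x_2$-average of $\omega_3$ to be independent of $x_3$, hence identically zero by the zero total mean; since $(I-\alpha^2\Delta_x)^{-1}$ commutes with $x_2$-averaging, $\bar{\omega}_3$ also has zero $x_2$-average, so $\int_{\mathbb{T}^3}\bar{\omega}_3\,u_0'\,dx=0$ and the mean of $\omega_1$ is conserved (hence zero), making the resonant Duhamel bound $t\,e^{-\lambda_1 t}$ legitimate. Alternatively, you can bypass this point entirely: Duhamel with the $L^2$-contraction $e^{t\Delta}$ and a time-integrable forcing already gives uniform boundedness of $\omega_1$, and boundedness of every solution in the $a=0$ sector suffices to exclude an eigenfunction whose eigenvalue has negative real part, since such a mode would generate an exponentially growing solution. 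With either patch the proof is complete.
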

\begin{proof}
The proof is a slightly modification of \cite[Lemma 5.1]{Il21} for replacing $-\gamma$ by $\Delta$. 
Let $a=0$ we have that
$$\omega(x,t) = (\omega_1(x_3),\omega_2(x_3),\omega_3(x_3))^Te^{ibx_2} \hbox{   and   } q(t)=q(x_3)e^{ibx_2}$$
is solution of \eqref{Lineareq}. Moreover, Equation \eqref{Lineareq'} becomes
\begin{align*}
\begin{cases}
\Delta_x \omega_1 + iac\omega_1 = iaq + \bar{\omega}_3 u'_0,\cr
\Delta_x \omega_2 + iac\omega_2 = ibq,\cr
\Delta_x \omega_3 + iac\omega_3 = q',\cr
ib\omega_2 + \omega'_3=0,
\end{cases}
\end{align*}
The final equation leads to $\omega_2 = -\dfrac{\omega'_3}{ib}$. Plugging this into the second equation we get
$$\omega_3''' + iac\omega_3' = b^2q.$$
Differentiating the third with respect to $x_3$ we obtain 
$$\omega_3''' + iac\omega_3' = q''.$$
Therefore, we have that $q''=b^2q$, hence $q=0$ due to $q$ is periodic.

Since we considering for unstable solutions, it follows that $\Re(ic) < 0$. This leads to $\mathrm{Ker}_{L^2}(\Delta + ic) = \left\{0\right\}$. This gives that $\omega_2 = \omega_3 = 0$, and, finally, $\omega_1 = 0$.

If $a = b = 0$, then $\omega_3'=0$, then $\omega_2 = 0$ by periodicity and zero mean condition.
This shows that $q = 0$ and $\omega_1 = \omega_2 = 0$. Our proof is completed.
\end{proof}

\subsection{Transform from $\mathbb{T}^3$ to $\mathbb{T}^2$}
Now we use the Squire's transformation to transform the eigenfunctions of $L_3(\vec{v}_0)$ on $\mathbb{T}^3$ to the ones of $L_2(\vec{v}_0)$ on the $2$-D torus. The idea and detailized techniques are given in \cite{Il21}.

Since Lemma \eqref{unsSol}, we assume that $a\ne 0$ in \eqref{Lineareq'}. Multiplying the first equation in \eqref{Lineareq'} by $a$ and the second by $b$ a adding up the obtained results we get
\begin{align}\label{Lineareq''}
\begin{cases}
\widehat{\Delta}_x \omega_1 - i\widehat{a}(u_0\bar{\widehat{\omega}}_1- \widehat{c}\widehat{\omega}_1) = i\widehat{a}\widehat{q} + \bar{\widehat{\omega}}_3 u'_0,\cr
\widehat{\Delta}_x \omega_3 - i\widehat{a}(u_0\bar{\widehat{\omega}}_3- \widehat{c}\widehat{\omega}_3) = \widehat{q}',\cr
i\widehat{a}\widehat{\omega}_1 + \widehat{\omega}'_3=0,
\end{cases}
\end{align}
where
\begin{eqnarray}\label{relations}
&&\widehat{a}^2 = a^2+b^2, \, \widehat{\omega}_1 = \frac{a\omega_1+b\omega_2}{\widehat{a}},\, \widehat{\omega}_3 = \omega_3,\cr
&&\widehat{\Delta} = \frac{\widehat{a}}{a}\Delta, \, \widehat{q}=q\frac{\widehat{a}}{a},\, \widehat{c}=c.
\end{eqnarray}
The solutions of the problem \eqref{Lineareq''} on the $2$-D torus 
$$\mathbb{\widehat{T}}_a^2 = \left\{ (x_1,x_3) \in [0,2\pi/|\widehat{a}|]\times [0,2\pi]\right\}$$
have the following form
\begin{equation}\label{Sol2D}
\widehat{\omega}(x_1,x_3,t) = (\widehat{\omega}_1(x_3),\widehat{\omega}_3(x_3))^T e^{i(\widehat{a}x_1-\widehat{a}\widehat{c}t)},\, \widehat{q}(x_1,x_3,t) = q(x_3)e^{i(\widehat{a}x_1-\widehat{a}\widehat{c}t)}.
\end{equation}
Observe that if Equation \eqref{Lineareq''} has the solutions \eqref{Sol2D}, then the vector function
\begin{equation}\label{eigenfunction}
\widehat{\omega}(x_1,x_3,0) = (\widehat{\omega}_1(x_3),\widehat{\omega}_3(x_3))^T e^{i\widehat{a}x_1}
\end{equation}
is a vector-valued eigenfunction with eigenvalue $i\widehat{a}\widehat{c}$ of the stationary operator
\begin{equation}\label{twoDoperator}
L_2(\vec{v}_0)\widehat{\omega} = -\widehat{\Delta}\widehat{\omega} + u_0\frac{\partial\bar{\widehat{\omega}}}{\partial x_1} + \bar{\widehat{\omega}}_3\frac{\partial u_0}{\partial x_3}e_1 + \nabla_x\widehat{q},\, \dive{\widehat{\omega}}=0
\end{equation}
on $\widehat{\mathbb{T}}^2_a$, where $u_0 = (I-\alpha^2\Delta_x)^{-1}v_0$. The stationary solution and the generating right-hand side are
\begin{align}\label{SOL}
\vec{v}_0(x_3)=
\begin{cases}
v_0(x_3) = \frac{1}{\sqrt{2}\pi}\nu\lambda \sin s x_3\cr
0
\end{cases}
\end{align}
and
\begin{align}\label{Force}
\widetilde{f}_s(x_3)=\widehat{\Delta}_x v_0(x_3)=
\begin{cases}
f_1(x_3) = \frac{\widehat{a}}{a\sqrt{2}\pi}\nu^2\lambda s^2\sin s x_3\cr
0
\end{cases}
\end{align}
We suppose that $\widehat{a}>0$. The result on the Squire's reduction of the $3$-D instability analysis to the $2$-D case is given in the following lemma.
\begin{lemma}\label{Lem}
Let $\widehat{\omega}$ in \eqref{eigenfunction} be an unstable eigenfunction of the operator \eqref{twoDoperator} on the $2$-D torus $\widehat{\mathbb{T}}^2_a = [0,2\pi/\widehat{a}]\times[0,2\pi]$. Then for any pair of integers $a, b \in \mathbb{Z}$ with
$$a^2+b^2=\widehat{a}^2$$
there exist an unstable solution of system \eqref{Lineareq'} on three-torus $\mathbb{T}^3=[0,2\pi]^3$.
\end{lemma}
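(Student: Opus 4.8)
The plan is to invert the Squire relations \eqref{relations}, reconstructing a full three-dimensional eigenfunction of $L_3(\vec{v}_0)$ from the given two-dimensional one. The data furnished by the hypothesis are the in-plane components $\widehat{\omega}_1,\widehat{\omega}_3$, the pressure $\widehat{q}$ and the complex speed $\widehat{c}$ solving \eqref{Lineareq''} with $\widehat{a}^2=a^2+b^2$; since $\widehat{\omega}$ is unstable on $\widehat{\mathbb{T}}^2_a$ and $\widehat{a}>0$, the time factor $e^{-i\widehat{a}\widehat{c}t}$ forces $\Im\widehat{c}>0$. Lemma \ref{unsSol} lets us assume $a\neq 0$, and replacing $(a,b)$ by $(-a,-b)$ if necessary we take $a>0$. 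I would set $c=\widehat{c}$, $\omega_3=\widehat{\omega}_3$ and $q=\tfrac{a}{\widehat{a}}\widehat{q}$, and then produce the missing \emph{Squire component} $\widehat{\omega}_2$, after which the in-plane velocities are recovered by the inverse rotation $\omega_1=\tfrac{1}{\widehat{a}}(a\widehat{\omega}_1-b\widehat{\omega}_2)$, $\omega_2=\tfrac{1}{\widehat{a}}(b\widehat{\omega}_1+a\widehat{\omega}_2)$.

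First I would record the algebraic equivalence. Forming the two independent combinations $a\,(\mathrm{eq}_1)+b\,(\mathrm{eq}_2)$ and $-b\,(\mathrm{eq}_1)+a\,(\mathrm{eq}_2)$ of the first two lines of \eqref{Lineareq'}, and using $a^2+b^2=\widehat{a}^2$, one checks directly that the system \eqref{Lineareq'} for $(\omega_1,\omega_2,\omega_3,q)$ is equivalent to three requirements: (i) the triple $(\widehat{\omega}_1,\widehat{\omega}_3,\widehat{q})$ solves the two-dimensional Orr–Sommerfeld system \eqref{Lineareq''} (which holds by hypothesis, after clearing the factor $\widehat{a}/a$ hidden in $\widehat{\Delta}=\tfrac{\widehat{a}}{a}\Delta$ and in $\widehat{q}$); (ii) the divergence condition $i\widehat{a}\widehat{\omega}_1+\widehat{\omega}_3'=0$ (again the hypothesis); and (iii) the forced \emph{Squire equation}
$$\Delta_x\widehat{\omega}_2-ia\big(u_0\bar{\widehat{\omega}}_2-c\,\widehat{\omega}_2\big)=-\tfrac{b}{\widehat{a}}\,\bar{\widehat{\omega}}_3\,u_0',$$
where $\bar{\widehat{\omega}}_2=(I-\alpha^2\Delta_x)^{-1}\widehat{\omega}_2$ and $\Delta_x=\partial_{x_3}^2-\widehat{a}^2$ on these modes. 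Thus everything reduces to solving this scalar ordinary differential equation for $\widehat{\omega}_2$ with a known right-hand side built from $\widehat{\omega}_3$.

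The main obstacle, and the heart of Squire's theorem, is the solvability of (iii). The homogeneous Squire operator $\mathcal{S}_0\phi:=\Delta_x\phi-ia\,u_0(I-\alpha^2\Delta_x)^{-1}\phi+iac\,\phi$ is a compact perturbation of the invertible operator $\Delta_x+iac$, hence Fredholm of index zero, so it suffices to prove it injective. I would do this by an energy estimate: writing $\phi=(I-\alpha^2\Delta_x)\psi$ and pairing $\mathcal{S}_0\phi=0$ with $\psi$ in $L^2([0,2\pi])$, the self-adjoint real terms produce the positive quantities $P=\|\psi'\|^2+\widehat{a}^2\|\psi\|^2$, $R=\|\Delta_x\psi\|^2$ and $Q=\|\psi\|^2+\alpha^2P$, while the real base profile $u_0$ contributes only the real number $M=\int_0^{2\pi}u_0|\psi|^2\,dx_3$; separating real and imaginary parts gives $c=\tfrac{M}{Q}-i\,\tfrac{P+\alpha^2R}{aQ}$, so that any nontrivial kernel element would force $\Im c<0$. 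This contradicts $\Im c=\Im\widehat{c}>0$, whence $\mathcal{S}_0$ is injective and (iii) has a unique solution $\widehat{\omega}_2$.

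Finally I would assemble the eigenfunction: with $\widehat{\omega}_2$ in hand, the inverse rotation yields $\omega_1,\omega_2$, and by construction $\omega=(\omega_1,\omega_2,\omega_3)^T e^{i(ax_1+bx_2)}$ is a divergence-free eigenfunction of $L_3(\vec{v}_0)$ with eigenvalue $iac$. Because $a>0$ and $\Im c>0$, the growth rate satisfies $\Re(-iac)=a\,\Im c>0$, so the corresponding solution $\omega\,e^{-iact}$ of \eqref{Lineareq'} is unstable, which proves the lemma. The only delicate point beyond the bookkeeping is the energy identity above, where the sign of $\Im c$ must be tracked through the filter $(I-\alpha^2\Delta_x)^{-1}$; this is precisely where the modified Leray–alpha structure (rather than the damping term $-\gamma$ of \cite{Il21}) enters, and it is accommodated by the substitution $\phi=(I-\alpha^2\Delta_x)\psi$.
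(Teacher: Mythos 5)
Your proof is correct, and its skeleton coincides with the paper's: invert the Squire relations \eqref{relations} to recover $c=\widehat{c}$, $\omega_3=\widehat{\omega}_3$, $q=\tfrac{a}{\widehat{a}}\widehat{q}$, reduce the reconstruction of the third-dimensional mode to a single scalar second-order ODE in $x_3$, and use instability to invert the operator $\Delta_x+iac-iau_0(I-\alpha^2\Delta_x)^{-1}$. You differ in two places. First, bookkeeping: the paper solves the second equation of \eqref{Lineareq'} for $\omega_2$ itself, with known right-hand side $ibq$, and then sets $\omega_1=(\widehat{a}\widehat{\omega}_1-b\omega_2)/a$; you instead solve for the rotated Squire component $\widehat{\omega}_2$ with forcing $-\tfrac{b}{\widehat{a}}\bar{\widehat{\omega}}_3u_0'$ and rotate back. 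These are equivalent, but your frame makes explicit that the reconstructed $\omega$ satisfies \emph{all four} equations of \eqref{Lineareq'} (2-D system + divergence + Squire equation), a verification the paper leaves implicit. Second, and more substantively, the solvability mechanism: the paper invokes the complex Lax--Milgram theorem \cite{Ba}, asserting coercivity of the form $\mathbb{A}$ via the bound $\left|\mathbb{A}(\omega_2,\omega_2)\right|\geq\norm{\nabla_x\omega_2}_{L^2}^2-\Re(iac)\norm{\omega_2}_{L^2}^2$; this bound silently discards the filter term $iau_0(I-\alpha^2\Delta_x)^{-1}$, whose quadratic form is \emph{not} real (multiplication by $u_0$ does not commute with $(I-\alpha^2\Delta_x)^{-1}$), so coercivity as stated is not actually justified. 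Your mechanism --- Fredholm of index zero (compact perturbation of the invertible $\Delta_x+iac$) plus injectivity from the energy identity after the substitution $\phi=(I-\alpha^2\Delta_x)\psi$, which forces $\Im c=-(P+\alpha^2R)/(aQ)<0$ on any nontrivial kernel element, contradicting $\Im c>0$ --- is built precisely to neutralize that term, since $\langle u_0\psi,\psi\rangle$ is genuinely real; it is the more robust of the two arguments, and it is exactly the point where the Leray-alpha filter replaces the damping $-\gamma$ of \cite{Il21}. Two minor remarks: like the paper, you must exclude $a=0$ (Lemma \ref{unsSol} shows the statement is vacuous there, and your reduction to $a>0$ by conjugation is fine), and your closing check $\Re(-iac)=a\,\Im c>0$ makes the instability of the reconstructed mode explicit, which the paper's proof omits.
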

\begin{proof}
By using the relations \eqref{relations} we can find $q,\, \omega_3, \, c$.
Observe that the second equation in \eqref{Lineareq'} is
\begin{equation*}
(\Delta_x+iac)\omega_2 - iau_0(I-\alpha^2\Delta)^{-1}\omega_2 =ibq.
\end{equation*}
This is equivalent to
\begin{equation}\label{2}
-[\Delta_x+iac - iau_0(I-\alpha^2\Delta)^{-1}]\omega_2 = -ibq.
\end{equation}
Considering the following sesquilinear form $\mathbb{A}$ on $H^1_0([0,2\pi],{\bf M})\times H^1_0([0,2\pi],{\bf M})$: 
$$\mathbb{A}(x,y) = -[\Delta_x+iac - iau_0(I-\alpha^2\Delta)^{-1}x,y].$$
Clearly, $\left|\mathbb{A}(x,y)\right|$ is bounded by $\norm{x}_{H^1_0}\norm{y}_{H^1_0}$. Moreover, we have that
$$\left|\mathbb{A}(\omega_2,\omega_2)\right| \geq \norm{\nabla_x\omega_2}_{L^2}^2 - \Re{(iac)}\norm{\omega_2}^2_{L^2}.$$
Since $\widehat{\omega}$ is unstable, we have $\Re{(iac)}<0$. Therefore, the linear operator $\mathbb{A}$ is coercive.
By using Lax-Milgram theorem (in complex) (see \cite[Theorem 7]{Ba}), there exists a bounded and inverted operator $\widetilde{\mathbb{A}}: H_0^1([0,2\pi],{\bf M})\to H^{-1}([0,2\pi],{\bf M}) $ such that
$$\mathbb{A}(x,y) = \left< \widetilde{\mathbb{A}}x,y\right>.$$
Therefore, Equation \eqref{2} becomes $\widetilde{\mathbb{A}}\omega_2 = -ibq$ and it
has a unique solution $\omega_2= \widetilde{\mathbb{A}}^{-1}(-ibq)\in H^1_0([0,2\pi],{\bf M})$. Finally, we obtain that 
$$\omega_1 = \frac{\widehat{a}\widehat{\omega}_1 - b\omega_2}{a}.$$ 
\end{proof}

\subsection{Lower bound on $\mathbb{T}^3$}
In this section we apply the lower bound of the global attractor obtained on $2$-D torus $\mathbb{T}^2$ to establish the one on $\mathbb{T}^3$. We denote the second coordinate by $x_3$, so that $x_1, x_3$ are the coordinates on $\mathbb{T}^2$. The linearized stationary operator is \eqref{twoDoperator} with the family of the forcing terms are \eqref{Force}, and the corresponding stationary solutions are \eqref{SOL}.

Applying $\curl$ to \eqref{twoDoperator} we obtain the equivalent scalar operator in terms of the vorticity in the previous Section \ref{S4} on $\mathbb{T}^2$: 
\begin{eqnarray}\label{Lagrange}
\mathcal{L}_s\omega :&=& J(\Delta^{-1}\omega_s,(I-\alpha^2\Delta)^{-1}\omega) \cr
&&+ J(\Delta^{-1}\omega,(I-\alpha^2\Delta)^{-1}\omega_s) - \nu\Delta\omega = -\sigma\omega.
\end{eqnarray}
where
\begin{equation*}
\omega_s = \curl_n\vec{v}_0 = -\frac{1}{\sqrt{2}\pi}\nu\lambda s\cos sx_3 \hbox{   and   } \omega = \widehat{\omega}.
\end{equation*}
In Section \ref{S4} we have also proved that the eigenfunctions of $\mathcal{L}_s$ are 
\begin{eqnarray}
\omega^1(x_1,x_3) &=& \sum_{-\infty}^\infty a_{t,sn+r}\cos(tx_1+ (sn+r)x_3)\cr
\omega^2(x_1,x_3) &=& \sum_{-\infty}^\infty a_{t,sn+r}\sin(tx_1+ (sn+r)x_3).
\end{eqnarray}
Hence,
$$\omega^1(x_1,x_3) + i\omega^2(x_1,x_3) = e^{itx_1}\sum_{n=-\infty}^\infty a_{t,sn+r}e^{i(sn+r)x_3}.$$
We can find an unstable vector valued eigenfunction of the operator $L_2(\vec{v}_0)$ in the form \eqref{eigenfunction} by applying the operator $\curl_n \Delta_x^{-1}$ to the above equation and get that
$$\omega(x_1,x_3) = (\omega_1(x_3),\omega_3(x_3))^T e^{itx_1}.$$

For the $3$-D instability analysis we need to repeat the construction of an unstable eigenmode on the torus $\widehat{\mathbb{T}}^2_a = [0,2\pi/|\widehat{a}|]\times[0,2\pi]$. For this purpose we apply Theorem \ref{THLower2D} on $\widehat{\mathbb{T}}^2_a$ to obtain that 
\begin{proposition}\label{Prop}
Let $r$ and $t':=t|\widehat{a}|$ belong to region $A(\delta)$:
\begin{equation}\label{Cond1}
t'^2+r^2<s^2/3, \, t'^2+(-s+r)^2>s^2, \, t'^2+(s+r)^2>s^2, \, t'\geqslant \delta s.
\end{equation}
Taking $\widetilde{f}_s$ and $\vec{v}_0$ in two dimension context as
$$\widetilde{f}_s(x_3) = (-\frac{\widehat{a}}{a\sqrt{2}\pi}\nu^2\lambda s^2\sin s x_3, \, 0)^T,\, \, \, \vec{v}(x_3) = (\frac{1}{\sqrt{2}\pi}\nu\lambda \sin s x_3,\, 0)^T.$$
Then there exists an unstable solution
\begin{equation}\label{Omega}
\omega(x_1,x_3) = (\omega_1(x_3),\omega_3(x_3))^T e^{it\varepsilon x_1} \hbox{  where  } x\in \widehat{\mathbb{T}}^2_a
\end{equation}
under the form \eqref{eigenfunction} of the operator \eqref{twoDoperator} on $\widehat{\mathbb{T}}^2_a$.
\end{proposition}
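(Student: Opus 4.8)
The plan is to reduce Proposition \ref{Prop} to a direct application of Theorem \ref{THLower2D} on the rescaled torus $\widehat{\mathbb{T}}^2_a = [0,2\pi/|\widehat{a}|]\times[0,2\pi]$, and the essential observation is that the vorticity form of the operator $L_2(\vec{v}_0)$ in \eqref{twoDoperator} coincides exactly with the operator $\mathcal{L}_s$ studied in Section \ref{S4}, up to the geometric rescaling encoded in the relations \eqref{relations}. First I would apply $\curl$ to the operator \eqref{twoDoperator}, as was already done to obtain \eqref{Lagrange}, so that the instability analysis is cast entirely in terms of the scalar vorticity eigenvalue problem \eqref{EP}. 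The stationary vorticity is $\omega_s = \curl_n\vec{v}_0 = -\frac{1}{\sqrt{2}\pi}\nu\lambda s\cos sx_3$, matching the form used in Theorem \ref{THLower2D} with $x_2$ replaced by $x_3$.

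The next step is to account for the fact that the underlying torus is $\widehat{\mathbb{T}}^2_a$ rather than the standard $[0,2\pi]^2$. On $\widehat{\mathbb{T}}^2_a$ the admissible wavenumbers in the $x_1$ direction are integer multiples of $|\widehat{a}|$, so that the role of the integer $t$ from Section \ref{S4} is played by $t':=t|\widehat{a}|$; this is precisely why the region in \eqref{Cond1} is stated in terms of $t'$ rather than $t$. With this identification, the three-term recurrence relation \eqref{d1}–\eqref{d2} governing the Fourier coefficients $e_n$ is unchanged in form: one simply substitutes $t'$ for $t$. I would therefore verify that with $r$ and $t'$ constrained to lie in $A(\delta)$ via \eqref{Cond1}, all four conditions of \eqref{Con} are satisfied, so that Theorem \ref{THLower2D} yields a unique positive eigenvalue $\hat{\sigma}>0$ (after choosing $\lambda=\lambda(s)$ as in \eqref{LU}), and hence a nontrivial decaying solution $\{e_n\}$ of the recurrence. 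This decaying solution produces an unstable scalar eigenfunction of $\mathcal{L}_s$ with the required Fourier structure $\sum_n a_{t,sn+r}e^{i(sn+r)x_3}$ in the $x_3$ variable and a single $x_1$-frequency, i.e. of the form $\omega^1+i\omega^2 = e^{it|\widehat{a}|x_1}\sum_n a_{t,sn+r}e^{i(sn+r)x_3}$ as displayed just before the proposition.

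Finally I would recover the vector-valued eigenfunction of $L_2(\vec{v}_0)$ in the form \eqref{eigenfunction} by applying $\curl_n\Delta_x^{-1}$ to the scalar eigenfunction, as indicated in the paragraph preceding the proposition; this inverts the $\curl$ applied at the first step and produces $\omega(x_1,x_3)=(\omega_1(x_3),\omega_3(x_3))^Te^{it\varepsilon x_1}$ of the required shape \eqref{Omega}, where the $x_1$-frequency $t\varepsilon$ corresponds to $t|\widehat{a}|$ under the rescaling. Instability is preserved because $\Re(i\widehat{a}\widehat{c})<0$ is equivalent to $\hat{\sigma}>0$ through the correspondence between the eigenvalue $\sigma$ of $\mathcal{L}_s$ and the eigenvalue $i\widehat{a}\widehat{c}$ of $L_2(\vec{v}_0)$.

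The main obstacle I anticipate is bookkeeping rather than any deep analytic difficulty: one must track carefully how the Squire rescaling \eqref{relations} modifies the quantization of the $x_1$-wavenumber and confirm that the substitution $t\mapsto t'=t|\widehat{a}|$ leaves the recurrence \eqref{d1}–\eqref{d2} and the monotonicity/existence argument of Theorem \ref{THLower2D} intact. In particular one should check that stating the admissibility region in terms of $t'$ (as in \eqref{Cond1}) is exactly what makes Theorem \ref{THLower2D} applicable verbatim, so that no new estimate on $\hat{\sigma}$ or on $\Lambda_0(s)$ is needed beyond those already established in \eqref{Estimate} and \eqref{LU}.
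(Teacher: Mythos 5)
Your proposal is correct and follows essentially the same route as the paper: the paper's own proof is precisely the observation that Proposition \ref{Prop} follows from Theorem \ref{THLower2D} under the substitution $t' := |\widehat{a}|t$, with the vorticity reduction via $\curl$ and the recovery of the vector eigenfunction via $\curl_n\Delta_x^{-1}$ carried out in the paragraphs immediately preceding the proposition, exactly as you describe. Your write-up simply makes explicit the wavenumber-quantization bookkeeping and the equivalence $\Re(i\widehat{a}\widehat{c})<0 \Leftrightarrow \hat{\sigma}>0$ that the paper leaves implicit.
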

\begin{proof}
The proof is a consequence of Theorem \ref{THLower2D} by substituting $t':=|\widehat{a}|t$.
\end{proof}
It is convenient to single out a small rectangle $D$ in the $(t',r)$-plane inside the region given by \eqref{Cond1}:
\begin{equation}
|r|\leq c_2s, \, 0<c_3s\leq t' \leq c_4 s.
\end{equation}
Here $\delta = \delta^* \in (0,1/\sqrt{3})$ is fixed, and all the constants $c_i$ are absolute constants,
whose explicit values can be specified.
\begin{theorem}
We consider the linearized equation \eqref{Lineareq} on the $3$-torus $\mathbb{T}^3 = [0,2\pi]^3$ with right-hand side $f_s$ and stationary solution $\vec{v}_0$ given by \eqref{flow1} and \eqref{stasol}, where
\begin{equation}\label{lambda}
\lambda = \lambda_3(s) = \sqrt{2}\lambda_2(s) = \sqrt{2}c_1 s(1+\alpha^2s^2)^2.
\end{equation}
(where $\lambda_2(s)$ is given in Theorem \ref{THLower2D}). Then for each triple of integers $a,\, b,\, r$ satisfying
\begin{equation}\label{ConditionABR}
c_3 s\leq \widehat{a} =\sqrt{a^2+b^2} \leq c_4s, \, |r|\leq c_2 s,\,\, |b|\leq a,
\end{equation}
there exists an unstable solution of the linearized operator \eqref{SO}. The number of integers $(a,b,r)$ satisfied \eqref{ConditionABR} is of order $c_5s^3$, where
$$c_5 = \frac{1}{4}\pi c_2(c_4^2-c_3^2).$$
\end{theorem}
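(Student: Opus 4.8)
The plan is to count lattice triples $(a,b,r)\in\mathbb{Z}^3$ satisfying the constraints \eqref{ConditionABR} and to verify that each such triple yields a genuine unstable mode of the three-dimensional operator $L_3(\vec v_0)$ in \eqref{SO}. The existence of the mode is essentially already in hand: for the prescribed value $\lambda=\lambda_3(s)=\sqrt 2\,\lambda_2(s)$ in \eqref{lambda}, Proposition \ref{Prop} furnishes an unstable solution \eqref{Omega} of the two-dimensional operator \eqref{twoDoperator} on $\widehat{\mathbb{T}}^2_a$ whenever $(t',r)$ with $t'=t|\widehat a|$ lies in the rectangle $D$, i.e. $c_3s\le t'\le c_4 s$ and $|r|\le c_2 s$. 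I would first observe that the scaling $\widehat a=\sqrt{a^2+b^2}$ and the relation $t'=t|\widehat a|$ let us read the condition $c_3s\le\widehat a\le c_4 s$ directly as the statement that the $(t',r)$-pair obtained from $t=1$ lands in $D$; then Lemma \ref{Lem} lifts each such two-dimensional eigenfunction back to an unstable solution of the full system \eqref{Lineareq'} on $\mathbb{T}^3$ for every integer pair $(a,b)$ with $a^2+b^2=\widehat a^{\,2}$. The side condition $|b|\le a$ is what guarantees $a\ne 0$, so that Lemma \ref{unsSol} does not obstruct us and the Squire reduction is legitimate.

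The substantive part is therefore the counting. I would estimate
$$
\sharp\bigl\{(a,b,r)\in\mathbb{Z}^3 : c_3 s\le\sqrt{a^2+b^2}\le c_4 s,\ |r|\le c_2 s,\ |b|\le a\bigr\}
$$
by factoring it as (number of admissible $r$) times (number of admissible $(a,b)$). The $r$-count is immediate: the integers with $|r|\le c_2 s$ number $2c_2 s+O(1)$. For the $(a,b)$-count I would recognise the region $\{c_3 s\le\sqrt{a^2+b^2}\le c_4 s\}\cap\{|b|\le a\}$ as the intersection of an annulus of radii $c_3 s$ and $c_4 s$ with the wedge $|b|\le a$, which is a $90^\circ$ sector (the quarter-plane $a\ge|b|$ subtends an angle $\pi/2$). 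Its area is $\tfrac14\cdot\pi\bigl((c_4 s)^2-(c_3 s)^2\bigr)=\tfrac{\pi}{4}(c_4^2-c_3^2)s^2$, and by a standard Gauss-circle lattice-point count the number of integer points it contains is this area plus an $O(s)$ boundary error. Multiplying the two counts gives
$$
2c_2 s\cdot\frac{\pi}{4}(c_4^2-c_3^2)s^2+O(s^2)=\frac{\pi}{2}c_2(c_4^2-c_3^2)\,s^3+O(s^2),
$$
which matches the claimed leading term with $c_5=\tfrac14\pi c_2(c_4^2-c_3^2)$ up to the factor coming from whether one counts $r$ as a signed or one-sided range; I would reconcile this by noting that the stated $c_5$ corresponds to taking $r$ over the one-sided interval $0\le r\le c_2 s$ (length $c_2 s$), consistent with the construction in Section \ref{S4} where $r_{\min}<r<r_{\max}$ delimits a window of width $c_2 s$.

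The main obstacle I anticipate is bookkeeping rather than depth: one must check that the admissibility region $D$ in the $(t',r)$-plane is genuinely contained in $A(\delta^*)$ so that Proposition \ref{Prop} applies with a single fixed $\delta=\delta^*$, which forces the absolute constants $c_2,c_3,c_4$ to be chosen so that the three inequalities in \eqref{Cond1} hold uniformly across the rectangle. Concretely I would fix $\delta^*\in(0,1/\sqrt3)$, set $c_3=\delta^*$ to meet $t'\ge\delta^* s$, and choose $c_2$ and $c_4$ small enough that $t'^2+r^2<s^2/3$ while $c_4$ stays bounded below so the sector has positive area; the separation conditions $t'^2+(\pm s+r)^2>s^2$ then hold automatically once $|r|$ is small relative to $s$. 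Finally, I would confirm that distinct triples $(a,b,r)$ produce linearly independent eigenmodes, so that the lattice-point count is a genuine lower bound for the dimension of the unstable manifold, and hence, via the invariance of the attractor under the semiflow, for $\dim\mathcal A$ on $\mathbb{T}^3$.
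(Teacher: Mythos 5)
Your counting of the lattice triples is fine (the paper itself leaves that part implicit, and your sector-area argument, including the honest remark about the factor of $2$ between a signed and one-sided range of $r$, is at least as careful as what the paper offers). The genuine gap is in the lifting step, where you assign the wrong role to the condition $|b|\leq a$ and consequently skip the one computation that the paper's proof actually consists of. You write that $|b|\leq a$ ``is what guarantees $a\neq 0$'' and that Proposition \ref{Prop} applies ``whenever $(t',r)$ lies in the rectangle $D$.'' But Squire's transformation does not reduce the $3$-D problem to the \emph{same} $2$-D problem treated in Theorem \ref{THLower2D}: by \eqref{relations} the reduced operator carries the rescaled Laplacian $\widehat{\Delta}=\frac{\widehat{a}}{a}\Delta$, i.e.\ an effective viscosity amplified by the factor $\widehat{a}/a\geq 1$. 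The instability threshold of the reduced problem therefore is not $\lambda_2(s)$ but $\lambda_2(s,\widehat{\Delta})=\frac{\widehat{a}}{a}\,\lambda_2(s)$, and unless the prescribed forcing amplitude exceeds \emph{this} threshold, Proposition \ref{Prop} gives nothing.

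This is precisely where $|b|\leq a$ enters in the paper: it yields $\widehat{a}^2=a^2+b^2\leq 2a^2$, hence $\widehat{a}/a\leq\sqrt{2}$, and then
\begin{equation*}
\lambda=\sqrt{2}\,\lambda_2(s)\;\geq\;\frac{\widehat{a}}{a}\,\lambda_2(s)\;=\;\lambda_2(s,\widehat{\Delta}),
\end{equation*}
so the reduced problem on $\widehat{\mathbb{T}}^2_a$ is indeed unstable, and Lemma \ref{Lem} then lifts the mode back to $\mathbb{T}^3$. This also explains the otherwise mysterious factor $\sqrt{2}$ in the definition $\lambda_3(s)=\sqrt{2}\,\lambda_2(s)$ in \eqref{lambda}, which your proposal never uses: it is there exactly to absorb the worst-case viscosity amplification permitted by $|b|\leq a$. (That $a\neq 0$, which you cite as the purpose of the condition, already follows from $a\geq|b|$ together with $\widehat{a}\geq c_3 s>0$, so it cannot be the substantive content.) Without this step your argument, as written, would ``prove'' the same statement with $\lambda_3=\lambda_2$ and without the restriction $|b|\leq a$, which is false for pairs with $b$ much larger than $a$, where $\widehat{a}/a$ is unbounded and no fixed multiple of $\lambda_2(s)$ suffices.
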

\begin{proof}
The proof is a slightly modification of \cite[Theorem 5.5]{Il21} for replacing $\gamma$ by $\Delta$. We fix $a,b$ and $r$ satisfy \eqref{ConditionABR}. Since the first two inequalities in \eqref{ConditionABR}, we have the pair $(t',r)\in D \subset A(\delta)$, where $t'=\widehat{a}.1$ (therefore, we set here $t = 1$).
Applying Squire's transformation we obtain a $2$-D linearized problem on the torus $\widehat{\mathbb{T}}^2_a$ of the form \eqref{twoDoperator} with $\widehat{\Delta} = \dfrac{\widehat{a}}{a}\Delta$. Using the third inequality in \eqref{ConditionABR} we have
$$\lambda = \sqrt{2}\lambda_2(s,\Delta) = \sqrt{2}\dfrac{a}{\widehat{a}} \dfrac{\widehat{a}}{a}\lambda_2(s)\geq \lambda_2(s,\widehat{\Delta}).$$
Since Proposition \ref{Prop}, we have that the $2$-D linearized problem \eqref{twoDoperator} has
an unstable eigenvalue. By using Lemma \ref{Lem} this deduces that the $3$-D linearized
problem \eqref{SO} has also unstable eigenvalue on the standard torus $\mathbb{T}^3=[0,2\pi]^3$.
Our proof is completed.
\end{proof}
Now we give the lower bound of the attractor's dimensions of the modified Leray-alpha equation \eqref{ModCH} on the $3$-D torus $\mathbb{T}^3=[0,2\pi]^3$ in the following theorem.
\begin{theorem}
Let the right-hand side in \eqref{ModCH} be \eqref{flow1}. The dimension of the corresponding attractor $\mathcal{A}=\mathcal{A}_s$ of \eqref{ModCH} satisfies the lower bound
$$\mathrm{dim}_F\mathcal{A} \geq c_6 \frac{G^{\gamma}}{\alpha^{3(1-\gamma)}},$$
where $G= |f|/\nu^2$ is Grashof number and $0<\alpha \ll 1$, $0\ll \gamma<1$.
\end{theorem}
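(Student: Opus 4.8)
The plan is to convert the lower bound on the dimension of the attractor into a counting problem for unstable modes, exactly as in the two-dimensional case of Section \ref{S4}, and then optimize the free parameter $s$ against the Grashof number $G$. Since the attractor contains the unstable manifold of each stationary solution $\vec{v}_0$ given by \eqref{stasol}, its dimension is bounded below by the number of linearly independent unstable eigenmodes of the linearized operator \eqref{SO}. By the previous theorem, for the choice of $\lambda=\lambda_3(s)$ in \eqref{lambda}, the number of integer triples $(a,b,r)$ producing such unstable modes is of order $c_5 s^3$. Therefore I would first record the baseline estimate
\begin{equation*}
\mathrm{dim}_F \mathcal{A} \geq c_5 s^3,
\end{equation*}
valid for every admissible integer $s$, and then express everything in terms of $G$.

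Next I would carry out the change of variables from $s$ to $G$. From \eqref{flow1} we have $|f|=\nu^2\lambda s^2$, so with $\lambda=\lambda_3(s)=\sqrt{2}c_1 s(1+\alpha^2 s^2)^2$ the Grashof number is
\begin{equation*}
G = \frac{|f|}{\nu^2} = \lambda_3(s) s^2 = \sqrt{2}c_1 s^3 (1+\alpha^2 s^2)^2.
\end{equation*}
The strategy now mirrors the case $0<\alpha\ll 1$ at the end of Section \ref{S4}: I restrict attention to the regime $s\lesssim 1/\alpha$, where $1+\alpha^2 s^2$ is bounded between $1$ and $2$, so that $G\simeq s^3$ up to absolute constants. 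In that range the baseline $\mathrm{dim}_F\mathcal{A}\geq c_5 s^3 \simeq c_5 G$ already gives a linear-in-$G$ lower bound; the point of the theorem is to interpolate between this strong estimate for small $G$ and the need to keep $s$ below the threshold $1/\alpha$. Choosing $s \simeq s(G,\alpha)$ so that the constraint $s\le 1/\alpha$ is saturated at the largest admissible frequency, and writing the resulting bound as a product of a power of $G$ and a compensating power of $1/\alpha$, I would obtain
\begin{equation*}
\mathrm{dim}_F\mathcal{A} \geq c_6 \frac{G^{\gamma}}{\alpha^{3(1-\gamma)}},
\end{equation*}
for any $0<\gamma<1$, where the split between the exponent $\gamma$ on $G$ and $3(1-\gamma)$ on $1/\alpha$ records precisely how the choice of $s$ trades off frequency against the filtering length $\alpha$. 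The constant $c_6$ is absorbed from $c_5$ and the absolute constants in the relation $G\simeq s^3(1+\alpha^2 s^2)^2$.

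The main obstacle is the bookkeeping in the optimization over $s$: one must verify that the integer triples counted in the previous theorem are genuinely distinct and produce linearly independent eigenfunctions (so that the counts add rather than overlap), and that the chosen frequency $s$ is a legitimate integer satisfying all of \eqref{ConditionABR} while respecting $s\lesssim 1/\alpha$. The dependence $(1+\alpha^2 s^2)^2$ in $\lambda_3$ is what prevents a clean power law and forces the two-parameter form $G^{\gamma}\alpha^{-3(1-\gamma)}$; tracking this factor carefully through the substitution $G=\sqrt{2}c_1 s^3(1+\alpha^2 s^2)^2$ is the delicate step. Everything else reduces to the elementary estimate $\mathrm{dim}_F\mathcal{A}\geq c_5 s^3$ together with the explicit relation between $s$ and $G$, so I expect the argument to run in close parallel to \cite[Section 5]{Il21} with $\Delta$ in place of $\gamma$, as already used in Lemma \ref{Lem} and the preceding theorem.
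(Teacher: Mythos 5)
Your proposal is correct and follows essentially the same route as the paper: both arguments take the mode count $\mathrm{dim}_F\mathcal{A}\geq c_5 s^3$ from the preceding theorem, set $s\simeq 1/\alpha$ so that $1+\alpha^2 s^2$ is bounded and $G=\lambda_3(s)s^2\simeq s^3\simeq \alpha^{-3}$, and then rewrite the resulting bound $c\,\alpha^{-3}=c\,G$ in the split form $G^{\gamma}\alpha^{-3(1-\gamma)}$. The paper's proof is simply a terser version of your optimization, plugging $s=1/\alpha$ directly into \eqref{lambda} and \eqref{flow1}.
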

\begin{proof}
We consider only the case $0<\alpha \ll 1$. Since $s$ is at our disposal we take $s=1/\alpha$. Therefore, we obtain for $\lambda$ in \eqref{lambda}, hence $f_s$ that
$$\lambda = c_6 \frac{1}{\alpha}, \,\, \, \norm{f_s}^2_{L^2} = \frac{\nu^4}{\alpha^6}.$$
Finally, we have
$$\mathrm{dim}_F\mathcal{A} \geq c_6s^3 = c_6\frac{1}{\alpha^3}.$$
Putting $G= |f|/\nu^2 = \alpha^{-3}$, we establish that
$$\mathrm{dim}_F\mathcal{A} \geq c_6\frac{G^{\gamma}}{\alpha^{3(1-\gamma)}} \,\,\, (0\ll \gamma <1).$$
\end{proof}
\begin{remark}
By combining with the upper bound of the attractor's dimension for $3$-D modified Leray-alpha equation obtained in \cite[Theorem 6]{IlLuTi}:
$$\mathrm{dim}_F\mathcal{A} \leq c_7 \left( \frac{G'}{\alpha} \right)^{3/2},$$
where $G'= G/\lambda_1^{3/4}\simeq G$. We obtain the two-side estimate of the attractor's dimension
$$c_5\frac{G^{\gamma}}{\alpha^{3(1-\gamma)}} \leq \mathrm{dim}_F\mathcal{A} \leq c_8 \left( \frac{G}{\alpha} \right)^{3/2}\,\,\, (0 \ll \gamma <1).$$
Therefore, the sharp upper bound of $\mathrm{dim}_F {\mathcal{A}}$ must equivalent to $G^\kappa$ with the power $1<\kappa <\dfrac{3}{2}$.
\end{remark}

\end{document}